\theoremstyle{plain}
\newtheorem{thm}{Theorem}[section]
\newtheorem{cor}[thm]{Corollary}
\newtheorem{lem}[thm]{Lemma}
\theoremstyle{definition}
\newtheorem{defn}[thm]{Definition}
\theoremstyle{remark}
\newtheorem{rem}[thm]{Remark}
\newcommand{\R}{\mathbb{R}}
\newcommand{\Z}{\mathbb{Z}}
\newcommand{\Q}{\mathbb{Q}}
\begin{document}

\title{The Knot Floer Cube of Resolutions and the Composition Product}

\author{Nathan Dowlin}
\date{}

\maketitle

\begin{abstract}

We examine the relationship between the (untwisted) knot Floer cube of resolutions and HOMFLY-PT homology. By using a filtration induced by additional basepoints on the Heegaard diagram for a knot $K$, we see that the filtered complex decomposes as a direct sum of HOMFLY-PT homologies of various subdiagrams. Jaeger's composition product formula shows that the graded Euler characteristic of this direct sum is the HOMFLY-PT polynomial of $K$.

\end{abstract}

\section{Introduction}

In \cite{Gukov}, Dunfield, Gukov, and Rasmussen conjectured a framework for unifying the $sl(N)$ homologies (for all N) of Khovanov and Rozansky and knot Floer homology of Ozsv\'{a}th and Szab\'{o}. Rasmussen successfully unified the $sl(N)$ homologies in \cite{Rasmussen} by finding a class of spectral sequences $E_{k}(N), N \ge 1$ starting at HOMFLY-PT homology and converging to $sl(N)$ homology, and even found a spectral sequence $E_{k}(-1)$ converging to  `$sl(-1)$' homology, which was evidence for a conjectured symmetry on HOMFLY-PT homology, but the relationship between Khovanov Rozansky homology and knot Floer homology remained a mystery.

A key property of HOMFLY-PT homology and $sl(N)$ homology that allowed this relationship was their ability to be constructed via an oriented cube of resolutions. The first such construction for knot Floer homology was given by Ozsv\'{a}th and Szab\'{o} in \cite{Szabo} with twisted coefficients, but they noted a similarity between the specialization to $t=1$ and HOMFLY-PT homology. This similarity was further studied by Manolescu in \cite{Manolescu} and by Gilmore in \cite{Gilmore}. Manolescu conjectured that if $C_{F}(D)$ is the cube of resolutions complex and $d_{i}$ is the component of the differential which increases the cube grading by $i$, then 

\[ H_{*}(H_{*}(C_{F}(D),d_{0}), d_{1}^{*}) \cong H_{H}(K) \]

\noindent
where $D$ is a braid diagram for a knot $K$ and $H_{H}(K)$ is the HOMFLY-PT homology of $K$.

We find that by putting a filtration on the knot Floer cube of resolutions, we can show that Manolescu's conjecture is very closely related to a categorification of Jaeger's composition product \cite{Jaeger}, and that if we filter the differentials $d_{0}$ and $d_{1}^{*}$ by adding a basepoint to the Heegaard diagram, then the graded Euler characteristic of the resulting homology is the HOMFLY-PT polynomial. The relationship with Jaeger's composition product is unexpected, as the composition product is combinatorial by nature and has historically been more associated with the quantum $sl_n$ and HOMFLY-PT invariants. 

We will describe the composition product with the HOMFLY-PT polynomial $P_{H}(a,q,L)$ defined in terms of the skein relation 

\[ aP_{H}(a,q,L_{+}) -a^{-1}P_{H}(a,q,L_{-})= (q-q^{-1})P_{H}(a,q,L_{0}) \]

\noindent
where $L_{+}$, $L_{-}$, and $L_{0}$ are identical except at one crossing, where $L_{+}$ has a positive crossing, $L_{-}$ has a negative crossing, and $L_{0}$ has the oriented smoothing. The invariant is uniquely determined by this relation, together with the normalization $P_{H}(unknot)=1$. We define the single-variable polynomial $P_{n}(q,L)$ by 

\[ P_{n}(q,L) = P_{H}(q^{n},q,L) \]

For $n \ge 1$, $P_{n}(q,L)$ gives the $sl_n$ polynomial of $L$, and $P_{0}(q,L)$ is the Alexander polynomial. The specialization of the HOMFLY-PT polynomial to the $sl_n$ polynomial corresponds to Rasmussen's spectral sequences from HOMFLY-PT homology to $sl_n$ homology, and the specialization to the Alexander polynomial gives motivation for why we might expect a spectral sequence to knot Floer homology as well. 

In order to talk about Jaeger's composition product, we must first define labelings of a diagram. Let $K$ be a knot with corresponding diagram $D$. Viewing $D$ as an oriented 4-valent graph, we say that a subset $S$ of the edges of $D$ is a cycle if at each vertex in $D$ the number of incoming edges in $S$ is equal the the number of outgoing edges in $S$. A \emph{labeling} $f$ of the diagram $D$ is a function from the set of edges in $D$ to the set $\{1,2\}$ such that $f^{-1}(1)$ is a cycle. (Note that  $f^{-1}(1)$ is a cycle iff $f^{-1}(2)$ is a cycle.)

We will put two restrictions on which cycles are allowed. First, we will make $D$ a decorated diagram, i.e. we will choose a marked edge $e_{0}$, and we will require that $f(e_0)=2$. Second, a cycle is said to make a turn at a crossing $c$ if the cycle has one incoming edge at $c$ and one outgoing edge at $c$, and those edges are not diagonal from one another. A labeling $f$ is \emph{admissible} if the cycle $f^{-1}(1)$ doesn't make any `left turns' at positive crossings or `right turns' at negative crossings. The non-admissible labelings are shown in Figure \ref{nonadmissible}.

\begin{figure}[h!]
 \centering
   \begin{overpic}[width=.7\textwidth]{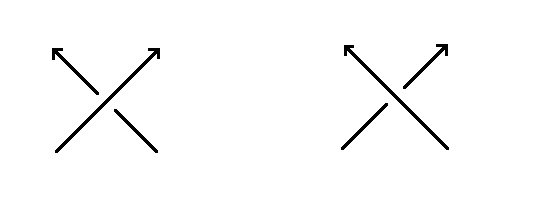}
   \put(7,9){$1$}
   \put(7,25){$1$}
   \put(29.5,9){$2$}
   \put(29.5,25){$2$}
   
   \put(59,9){$2$}
   \put(59.5,25){$2$}
   \put(82,9){$1$}
   \put(82,25){$1$}
   \end{overpic}
\caption{Non-Admissible Labelings} \label{nonadmissible}
\end{figure}

Since the cycle $f^{-1}(1)$ uniquely determines the labeling $f$, we will say that a cycle $Z$ is admissible if the unique labeling $f$ with $f^{-1}(1)=Z$ is admissible. The two cycles $f^{-1}(1)$ and $f^{-1}(2)$ can both be viewed as diagrams of links if we retain the crossing information whenever one of them contains all four edges at a crossing, and forget it otherwise. We will refer to these diagrams as $D_{f,1}$ and $D_{f,2}$, respectively. 

Define $s(D_{f,i})$ to be the sum of the signs of the crossings in $D$ such that $f^{-1}(i)$ has at least one edge incident to the crossing. (Note: This is a slight abuse of notation, as $s(D_{f,i})$ depends on $D$ as well as $D_{f,i}$.) With this language, the $m=1$ version of the composition product can be stated as

\begin{equation}\label{p1}
\begin{split}
  \mathop{\sum_{f \text{ admissible}}}_{f(e_{0})=2}  \Big[ (-1)^{\mathcal{T}_{-}(f)}(q-q^{-1})^{\mathcal{T}(f)} & q^{r(D_{f,2})-s(D_{f,2})}a^{-r(D_{f,1})-s(D_{f,1})} \\
 & \cdot P_{H}(q,q,D_{f,1}) P_{H}(a,q,D_{f,2}) \Big]   = P_{H}(aq,q,D)
\end{split}  
\end{equation}

\noindent
where $\mathcal{T}_{-}(f)$ is the number of turns at negative crossings in the cycle $f^{-1}(1)$, $\mathcal{T}(f)$ is the total number of turns, and $r$ is the rotation number of a cycle, which will be defined in Section 2. 

A reader familiar with the composition product will note that this is not quite the traditional definition. Aside from the superficial change of not shifting $P_{H}$ by the writhe of $D$, the standard definition has no marked edge, and comes with a normalization factor of $\frac{a-a^{-1}}{q-q^{-1}}$. The reason for this is that we are trying to relate the middle HOMFLY-PT homology to knot Floer homology, but the standard composition product would categorify to the unreduced HOMFLY-PT homology, which is twice as large. These differences will be discussed in Section \ref{2.1}, and this unreduced version can be achieved by simply adding an unlinked component, and placing the marked edge on it. 

We will also be interested in the specialization to $a=q^{-1}$, which gives the Alexander polynomial.

\begin{equation}
\label{p2}
 \mathop{\sum_{f \text{ admissible}}}_{f(e_{0})=2} (-1)^{\mathcal{T}_{-}(f)}(q-q^{-1})^{\mathcal{T}(f)}q^{r(D)+s(D_{f,1})-s(D_{f,2})} P_{1}(q,D_{f,1})P_{-1}(q,D_{f,2}) = P_{0}(q,D)
 \end{equation}

We will place two filtrations on the knot Floer cube of resolutions, one coming from height in the cube, and the other coming from extra basepoints in the Heegaard diagram, to be described in Section \ref{basepointsection}. The resulting complex will have a triple grading: the Maslov grading, the Alexander grading, and the cube grading. In all cases we will work over $\Z_{2}$, as signs in knot Floer homology can be difficult to compute. However, HOMFLY-PT homology is a well-defined invariant over $\Z_{2}$ as well, so this will not cause any problems.

Since HOMFLY-PT homology has only been proved to be invariant under braid-like Reidemeister moves, we will assume $D$ is in decorated braid position - note that this implies that $D_{f,i}$ is also a braid for any labeling $f$. 

\begin{thm}
\label{theorem1}

Let $C_{F}(D)$ be the complex given by the (untwisted) cube of resolutions for knot Floer homology, and $d^{f}_{i}$ the differentials that change the cube filtration by $i$ and preserve the basepoint filtration. Then, ignoring the grading shift for each $f$,

\begin{equation}
\label{bigthm}
H_{*}(H_{*}(C_{F}(D),d^{f}_{0}),(d^{f}_{1})^{*})=  \mathop{\bigoplus_{f \text{ admissible}}}_{f(e_{0})=2}  H_{1}(D_{f,1}) \otimes H_{H} (D_{f,2},\mathcal{T}(f)) 
\end{equation}

\noindent
where $H_{H}(D,k)$ is the HOMFLY-PT homology of $D$ reduced $k$ times. 

In particular, there is a spectral sequence from the right side of (\ref{bigthm}) to $\mathit{HFK}^{-}(K)$. For a spectral sequence converging to $\widehat{\mathit{HFK}}(K)$, we change $\mathcal{T}(Z)$ to $\mathcal{T}(Z)+1$.

\end{thm}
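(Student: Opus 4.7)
The plan is to use the basepoint filtration to break the computation into local pieces at each crossing, identify each local piece with the corresponding portion of the HOMFLY-PT complex of $D_{f,2}$ (with reductions coming from turns) tensored with an $sl_1$-like factor for $D_{f,1}$, and then reassemble the local contributions across the braid.

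First, I would describe the Heegaard diagram for $C_F(D)$ together with the additional basepoints specified in Section \ref{basepointsection}, and check that $d_0^f$ and $d_1^f$ are precisely the components of the untwisted cube-of-resolutions differential that factor through a prescribed collection of basepoints. Each generator then carries a canonical labeling $f$ of the edges of $D$ into $\{1,2\}$, determined by which basepoints it sits between. Because these differentials preserve the labeling, $C_F(D)$ decomposes, on the associated graded, as a direct sum indexed by labelings that are globally consistent, i.e.\ cycles in the sense of Section~1. The constraint $f(e_0)=2$ is imposed by placing the marked basepoint on $e_0$, and the non-admissible cycles shown in Figure \ref{nonadmissible} can be checked to have acyclic local pieces at the offending crossing, so after passing to $d_0^f$-homology the sum effectively runs only over admissible labelings.

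Next, for each admissible $f$ with $f(e_0)=2$, I would compute $H_*(C_F(D), d_0^f)$ crossing by crossing. At each crossing there are a handful of allowed local labeling types (all four edges in a single color, straight-through in a single color, or an admissible turn), and in each type I would match the local Floer complex with the corresponding local HOMFLY-PT complex for $D_{f,2}$, together with an independent local factor for $D_{f,1}$. Turns are the delicate case: an admissible turn leaves behind an extra generator after $d_0^f$-homology, which globally manifests as a reduction of the HOMFLY-PT complex at an edge adjacent to the turn. Summing across all crossings produces a reduction of multiplicity $\mathcal{T}(f)$ on $D_{f,2}$, giving the factor $H_H(D_{f,2}, \mathcal{T}(f))$, while the $f^{-1}(1)$ contribution collapses to $H_1(D_{f,1})$. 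Taking $(d_1^f)^*$-homology then induces precisely the HOMFLY-PT differential on the $D_{f,2}$ factor and acts trivially on the $H_1$ factor, since $sl_1$ homology is concentrated in a single bidegree.

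The spectral sequence statements then follow formally: the basepoint filtration on $(C_F(D), d_0 + d_1)$ has $E_2$-page given by the double homology computed above and abuts to $H_*(C_F(D)) = \mathit{HFK}^-(K)$ by the main theorem of \cite{Szabo}; placing an additional free basepoint pair on the diagram gives $\widehat{\mathit{HFK}}(K)$, and the same local analysis introduces one additional turn-reduction per admissible labeling, replacing $\mathcal{T}(f)$ by $\mathcal{T}(f)+1$. The main obstacle throughout will be the crossing-by-crossing identification of local Floer pieces with local HOMFLY-PT complexes: one must verify, for each crossing sign and each admissibility type, that the associated graded of the local Floer piece matches the appropriate local HOMFLY-PT model with the correct reduction at each turn, and that the two forbidden configurations in Figure \ref{nonadmissible} contribute acyclic summands. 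This case analysis will crucially exploit the sign asymmetry between left and right turns that distinguishes positive from negative crossings.
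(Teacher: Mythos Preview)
Your overall strategy matches the paper's: the basepoint filtration splits $C_F(D)$ into subcomplexes $C_F(Z)$ indexed by cycles, and a crossing-by-crossing analysis of each $C_F(Z)$ identifies it with the HOMFLY-PT complex of $D_{f,2}$ tensored with reducing factors. However, your sketch misplaces where in the cube filtration two of the key phenomena occur, and carried through literally would not yield the stated identification.

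The non-admissible labelings do \emph{not} vanish after $d_0^f$-homology alone. At a non-admissible turn (e.g.\ the local cycle $Z_1$ at a negative crossing) both the singularized and the smoothed vertex contribute isomorphic pieces to the $d_0^f$-homology; what kills the summand is that the edge map between them---a component of $d_1^f$---is the canonical isomorphism, so the cancellation happens only at the $(d_1^f)^*$ stage. Dually, at an admissible turn that same edge map is multiplication by some $U_i$, and this mapping-cone factor $R\xrightarrow{U_i}R$ \emph{is} the reduction: the reducing maps live in $d_1^f$, not as residual generators surviving $d_0^f$. The correct dictionary is that $d_0^f$ recovers the HOMFLY-PT horizontal differential $d_+$, while $d_1^f$ carries both the vertical differential $d_v$ and the $\mathcal{T}(f)$ reducing maps. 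You will also need the observation (Lemma~\ref{reducedLemma}) that whenever $f^{-1}(1)\neq\emptyset$, every component of $D_{f,2}$ meets at least one turn, so that $H_H(D_{f,2},\mathcal{T}(f))$ is well-defined independently of which particular edges are reduced. Finally, for the $\widehat{\mathit{HFK}}$ statement the extra ``$+1$'' comes from a single reduction at the marked edge $e_0\in D_{f,2}$, not from an additional turn introduced by a free basepoint pair.
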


We define reducing at an edge $e_{i}$ to be tensoring with the complex $R \xrightarrow{U_{i}} R$. With the proper gradings, this complex will contribute a $q-q^{-1}$ to the graded Euler characteristic. Thus, these reductions give us the $(q-q^{-1})^{\mathcal{T}(f)}$ in the composition product. The $\mathcal{T}(Z)$ reductions of $H_{H}(D_{f,2})$ will be done so that each component gets reduced at least once, which makes this expression both well-defined and, in the reduced case, finite-dimensional. Therefore, since $H_1$ categorifies the $sl_1$ polynomial and $H_{H}$ categorifies the HOMFLY-PT polynomial, applying (\ref{p1}) we get the following corollary:

\begin{cor}

The graded Euler characteristic of the basepoint - filtered $E_{2}$ page 

\[H_{*}(H_{*}(C_{F}(D),d^{f}_{0}),(d^{f}_{1})^{*})\]

\noindent
is the HOMFLY-PT polynomial $P_{H}(aq,q,D)$.

\end{cor}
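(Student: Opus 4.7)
The plan is to take the graded Euler characteristic of both sides of equation (\ref{bigthm}) in Theorem \ref{theorem1} and match the resulting sum, term by term, with the left side of Jaeger's composition product identity (\ref{p1}).

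First, since we work over $\Z_2$ and a spectral sequence preserves graded Euler characteristic, the graded Euler characteristic of the $E_2$ page $H_{*}(H_{*}(C_{F}(D),d^{f}_{0}),(d^{f}_{1})^{*})$ equals that of the original cube of resolutions complex $C_F(D)$. I would next compute the contribution of each summand on the right-hand side of (\ref{bigthm}). Because $H_1(D_{f,1})$ categorifies the $sl_1$ polynomial, it has graded Euler characteristic $P_1(q,D_{f,1})=P_H(q,q,D_{f,1})$. Likewise, $H_H(D_{f,2})$ categorifies the HOMFLY-PT polynomial, so $H_H(D_{f,2},\mathcal{T}(f))$ — the complex obtained by tensoring with $\mathcal{T}(f)$ copies of the reduction complex $R\xrightarrow{U_i} R$ — has graded Euler characteristic $(q-q^{-1})^{\mathcal{T}(f)} P_H(a,q,D_{f,2})$, as already noted in the paragraph immediately preceding the corollary.

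The next step is to reinstate the ``grading shift for each $f$'' that Theorem \ref{theorem1} omits. I would show that this shift is precisely
\[
(-1)^{\mathcal{T}_{-}(f)}\, q^{r(D_{f,2})-s(D_{f,2})}\, a^{-r(D_{f,1})-s(D_{f,1})},
\]
so that the graded Euler characteristic of the $f$-summand is exactly the summand appearing on the left side of (\ref{p1}). The sign $(-1)^{\mathcal{T}_{-}(f)}$ should come from the Maslov contribution of the turns at negative crossings, the $q$-shift from the quantum grading assigned to the HOMFLY-PT subcomplex on $D_{f,2}$ (rotation number plus sign correction), and the $a$-shift from the $sl_1$ subcomplex on $D_{f,1}$, which has no $a$-content of its own, together with the overall normalization required to pass from the individual braid diagrams back to $D$.

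Summing over all admissible labelings with $f(e_0)=2$ then gives exactly the left-hand side of (\ref{p1}), which by Jaeger's identity equals $P_{H}(aq,q,D)$, proving the corollary. The main obstacle is not any deep homological argument — those live inside Theorem \ref{theorem1} and inside Jaeger's formula — but rather the careful bookkeeping of the three gradings (Maslov, Alexander, cube) across the decomposition and verifying that the shift packaged into each summand of (\ref{bigthm}) matches the monomial prefactor $(-1)^{\mathcal{T}_{-}(f)}(q-q^{-1})^{\mathcal{T}(f)} q^{r(D_{f,2})-s(D_{f,2})} a^{-r(D_{f,1})-s(D_{f,1})}$ dictated by the composition product. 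Once this match is confirmed, the corollary is an immediate consequence of Theorem \ref{theorem1} and equation (\ref{p1}).
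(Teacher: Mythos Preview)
Your proposal is correct and follows essentially the same route as the paper: the corollary is deduced from Theorem~\ref{theorem1} together with the destabilized composition product (\ref{p1}), using that $H_{1}$ categorifies $P_{1}=P_{H}(q,q,\,\cdot\,)$, that $H_{H}$ categorifies $P_{H}$, and that each reducing complex contributes a factor of $q-q^{-1}$. The only substantive work, as you correctly isolate, is the bookkeeping that the grading shift attached to each labeling $f$ produces exactly the monomial $(-1)^{\mathcal{T}_{-}(f)}q^{r(D_{f,2})-s(D_{f,2})}a^{-r(D_{f,1})-s(D_{f,1})}$; the paper carries this out in Section~\ref{gradings} (culminating in Theorems~\ref{gradedhomflythm} and~\ref{poincarehomfly}), where a mirror substitution $a\mapsto a^{-1}$, $q\mapsto q^{-1}$ is used to reconcile Rasmussen's $(gr_q,gr_h,gr_v)$ conventions with the composition product formula.
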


By relaxing the cube filtration, we have instead a doubly graded complex. These complexes are related via Rasmussen's $E_{k}(-1)$ spectral sequence - we start at the (1, HOMFLY-PT) composition product, and by running the $E_{k}(-1)$ spectral sequence on HOMFLY-PT homology we get a formulation of knot Floer homology in terms of the (1, -1) composition product.

\begin{thm}

With $d^{f}_{i}$ defined as above, and again ignoring the grading shifts,

\begin{equation}
\label{thm2}
H_{*}(C_{F}(D),d^{f}_{0}+d^{f}_{1})=  \mathop{\bigoplus_{f \text{ admissible}}}_{f(e_{0})=2}  H_{1}(D_{f,1}) \otimes H_{-1} (D_{f,2},\mathcal{T}(f))
\end{equation}

\noindent
or, in the case of reduced knot Floer homology,

\begin{equation}
\label{thm2red}
 H_{*}(\overline{C}_{F}(K),d_{00}+d_{10})=  \mathop{\bigoplus_{f \text{ admissible}}}_{f(e_{0})=2}  H_{1}(D_{f,1}) \otimes H_{-1} (D_{f,2},\mathcal{T}(f)+1)=  \mathop{\bigoplus_{f \text{ admissible}}}_{f(e_{0})=2} V^{\otimes \mathcal{T}(f)} 
 \end{equation}

\noindent
where $V$ is a two-dimensional vector space over $\Z_{2}$. There are differentials on these complexes giving $HFK^{-}(K)$ and $\widehat{HFK}(K)$, respectively.

\end{thm}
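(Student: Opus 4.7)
The plan is to combine Theorem \ref{theorem1} with Rasmussen's $E_{k}(-1)$ spectral sequence: Theorem \ref{theorem1} already computes the $E_{2}$ page of the cube-filtration spectral sequence for $(C_{F}(D), d^{f}_{0}+d^{f}_{1})$, and what remains is to run that spectral sequence to convergence on each summand, replacing $H_{H}$ by $H_{-1}$.

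First I would recycle the basepoint decomposition developed for Theorem \ref{theorem1}: with respect to the basepoint filtration, the associated graded of $(C_{F}(D), d^{f}_{0}+d^{f}_{1})$ splits as a direct sum, indexed by admissible labelings $f$ with $f(e_{0})=2$, of tensor products of an $sl_{1}$-type complex for $D_{f,1}$ with a HOMFLY-PT-type complex for $D_{f,2}$ reduced $\mathcal{T}(f)$ times. The cube filtration on each such summand gives a spectral sequence whose $E_{2}$ page is $H_{1}(D_{f,1}) \otimes H_{H}(D_{f,2}, \mathcal{T}(f))$ by Theorem \ref{theorem1}. The difference from Theorem \ref{theorem1} is that we now want the $E_{\infty}$ page rather than the $E_{2}$ page.

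The main obstacle is to identify the differentials on the later pages with those of Rasmussen's $E_{k}(-1)$ spectral sequence, acting only on the HOMFLY-PT tensor factor. Since the $sl_{1}$ cube complex for $D_{f,1}$ already computes $H_{1}(D_{f,1})$ at $E_{2}$ with no further perturbation, the higher differentials are forced onto the HOMFLY-PT factor. I would verify, by a local crossing-by-crossing computation comparing the Heegaard diagram edge maps of Ozsv\'{a}th--Szab\'{o} with Rasmussen's construction, that the induced maps on $H_{H}(D_{f,2}, \mathcal{T}(f))$ agree up to homotopy with the differentials Rasmussen uses to build $E_{k}(-1)$. Granted this identification, the convergence of $E_{k}(-1)$ to $H_{-1}$ yields (\ref{thm2}).

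For the reduced formula (\ref{thm2red}), the extra reduction forced by $f(e_{0})=2$ guarantees that every component of $D_{f,2}$ contains a reduced edge, so $H_{-1}(D_{f,2}, \mathcal{T}(f)+1)$ collapses to a tensor power of $V$; combining with $H_{1}(D_{f,1}) \cong \Z_{2}$ produces $V^{\otimes \mathcal{T}(f)}$ on each summand. Finally, the spectral sequences to $\mathit{HFK}^{-}(K)$ and $\widehat{\mathit{HFK}}(K)$ arise from reintroducing the omitted cube differentials $d^{f}_{i}$ for $i \geq 2$, since $(C_{F}(D), \sum_{i} d^{f}_{i})$ is by construction the full (respectively reduced) knot Floer complex.
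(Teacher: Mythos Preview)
Your route via Theorem~\ref{theorem1} and then running the cube spectral sequence to $E_{\infty}$ is more circuitous than what the paper does, and your proposed ``local crossing-by-crossing computation'' would in fact reproduce the stronger statement the paper already has. The paper's Theorem~\ref{cyclecomp} identifies the \emph{chain complex} $(C_{F}(Z), d^{f})$ itself---not just its $E_{2}$ page---with $(C_{H}(D_{f_{Z},2}), d_{+}+d_{v})$ tensored with the reducing Koszul complexes. From this chain-level identification, Theorem~\ref{theorem1} follows by reading off the $E_{2}$ page of the vertical (cube) filtration, and the present theorem follows by simply taking homology of the total differential $d_{+}+d_{v}$, which by definition is $H_{-1}$. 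There is no separate step of matching higher differentials with Rasmussen's $E_{k}(-1)$: once the complexes agree on the nose, so do all pages of the induced spectral sequence. In particular, the proof of Theorem~\ref{cyclecomp} already records that there are \emph{no} $d^{f}_{i}$ for $i\ge 2$ (no filtered disc hits more than one $B$ basepoint), so $d^{f}=d^{f}_{0}+d^{f}_{1}$ is the whole filtered differential.

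This last point also means your final paragraph misidentifies the source of the spectral sequence to $\mathit{HFK}^{-}(K)$ and $\widehat{\mathit{HFK}}(K)$. It does \emph{not} come from reintroducing cube differentials $d^{f}_{i}$ with $i\ge 2$---those vanish. It comes from the \emph{basepoint} filtration: one reintroduces the components of the full differential $d$ on $C_{F}(D)$ that strictly lower the basepoint filtration (i.e.\ discs passing through the $p_{i}$), and the associated spectral sequence converges to $\mathit{HFK}$. For the reduced simplification, the fact that every component of $D_{f,2}$ is reduced is the content of Lemma~\ref{reducedLemma} (turns of the cycle, not the condition $f(e_{0})=2$, supply a reducing edge on each component when $f^{-1}(1)\neq\emptyset$; the marked-edge reduction handles the empty cycle).
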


The simplification in (\ref{thm2red}) follows from the simplicity of $H_{1}$ and $H_{-1}$. For any link $L$, $H_{1}(L)$ is always one-dimensional. $H_{-1}$ is slightly more complicated, because if any component of $L$ is not reduced, then $H_{-1}$ is infinite dimensional. However, in the case that every component is reduced on at least one edge, we have that $H_{-1}(L,k)=V^{\otimes k-1}$.

Theorem \ref{theorem1} computes the same quantity conjectured by Manolescu to give HOMFLY-PT homology, except that the differentials have been filtered by a large set of basepoints in the Heegaard diagram. These basepoints allow the homology to be computed quite easily. However, they come at the cost of our homology being much larger than HOMFLY-PT homology, and clearly not invariant under Reidemeister moves. We hope that in future papers we will be able to relate this quantity to the $E_{2}$ page without the basepoint filtration.

These theorems seem to give a first step towards categorifying Jaeger's composition product. In \cite{Wagner}, Wagner gave a categorification of the composition product for the $sl_{n}$ polynomials, defined only for fully singular diagrams. If we were to restrict the homology to a fully singular diagram, it closely resembles a natural extension of Wagner's construction to include HOMFLY-PT homlology.

\subsection{Acknowledgements} The author would like to thank Zolt\'{a}n Szab\'{o} for many valuable discussions during the course of this work. The author would also like to thank Ciprian Manolescu, Hao Wu, and Andrew Manion for their helpful suggestions.

\section{Background and Notation}

\subsection{The Composition Product} \label{2.1}

\subsubsection{Jaeger's Definition}

Let $D$ be a diagram for a knot $K$. Viewing $D$ as an oriented 4-valent graph, let $v(D)$ denote the vertices (or crossings) of $D$ and $e(D)$ the edges of $D$. 

Let $f$ be a function from $e(D)$ to $\{1,2\}$ that satisfies the conditions given in the introduction to make it a labeling. Recall that a labeling is admissible if the cycle $f^{-1}(1)$ made no left turns at positive crossings or right turns at negative crossings. For Jaeger's composition product, there is no marked edge, so we won't have to worry about the condition $f(e_{0})=2$.

Given a diagram $D$, consider the diagram obtained by changing each crossing in $D$ to the oriented smoothing. The resulting diagram must be a collection of oriented circles - these are known as the Seifert circles of $D$. We define the \emph{rotation number} $r(D)$ to be sum of the signs of the Seifert circles, with a circle contributing a $+1$ if it is oriented counterclockwise and $-1$ if it is oriented clockwise. Note that when $D$ is a braid, $r(D)$ is simply the negative of the number of strands in the braid, i.e. $r(D)=-b$.

With the HOMFLY-PT polynomial $P_{H}$ as defined in the introduction, let $P'_{H}(a,q,D)=(\frac{a-a^{-1}}{q-q^{-1}})(a^{w(D)})P_{H}(a,q,D)$. Note that $P'_{H}$ is invariant under Reidemeister II and III moves, but performing a Reidemeister I move changes the writhe, so one picks up a factor of $a$ or $a^{-1}$ depending on the sign of the crossing. With this normalization, $P'_{H}(unknot)=\frac{a-a^{-1}}{q-q^{-1}}$, and $P'_{H}(\emptyset)=1$, where $\emptyset$ denotes the empty diagram. Jaeger's composition product can be stated as follows:

\begin{equation}
\label{Jaeger}
 \mathop{\sum_{f \text{ admissible}}} (q-q^{-1})^{\mathcal{T}(f)}a_{1}^{r(D_{f,2})}a_{2}^{-r(D_{f,1})} P'_{H}(a_{1},q,D_{f,1})P'_{H}(a_{2},q,D_{f,2}) = P'_{H}(a_{1}a_{2},q,D)
\end{equation}

The proof of this formula is combinatorial in nature - one can show that it behaves properly under Reidemeister moves and that it satisfies the necessary skein relation via local computations. To complete the proof, we just have to check that it works on the unknot, which is calculated below. For details, see \cite{Jaeger}.

\[ a_{2}^{-1} \frac{a_{1}-a_{1}^{-1}}{q-q^{-1}} + a_{1} \frac{a_{2}-a_{2}^{-1}}{q-q^{-1}} = \frac{a_{1}a_{2}-a_{1}^{-1}a_{2}^{-1}}{q-q^{-1}}=P_{H}'(a_{1}a_{2}, q, unknot) \]

\subsubsection{The Destabilized Composition Product}
\label{dest}

We develop an adaptation of Jaeger's composition product for a decorated diagram $D$, i.e. a diagram that has one marked edge $e_{0}$, in the special case where $a_{1}=q$. The diagram $D$ now has a special Seifert circle, the one containing the marked edge $e_{0}$ - we will call this circle $S_{0}$. We will define the sign of a Seifert circle $S$ as follows:

\begin{equation}
\label{signs}
 sign(S) = \begin{cases} 
      +1 & \textrm{ if $S$ is oriented CCW and $S$ does not contain $S_{0}$} \\
      -1 & \textrm{ if $S$ is oriented CCW and $S$ contains $S_{0}$} \\
      -1 & \textrm{ if $S$ is oriented CW and $S$ does not contain $S_{0}$} \\
      +1 & \textrm{ if $S$ is oriented CW and $S$ contains $S_{0}$} \\
      0 & \textrm{ if $S=S_{0}$} \\
      
   \end{cases} 
\end{equation}

An alternative way to view these signs is to imagine our diagram is in $S^{2}$ instead of the plane, so that each Seifert circle bounds two discs. To determine the sign of the Seifert circle, we view it as the boundary of the disc that does not contain the edge $e_{0}$. Then, as before, we say that it is $+1$ if it is oriented CCW and negative if it is oriented $CW$. The special circle containing $e_{0}$ has no contribution.

With these sign conventions, let $r(D)$ denote the sum of the signs of the Seifert circles. We can define our reduced version of the composition product by 

\begin{equation}
\label{compred}
\begin{split}
  \mathop{\sum_{f \text{ admissible}}}_{f(e_{0})=2} \Big[ (q-q^{-1})^{\mathcal{T}(f)}q^{r(D_{f,2})-s(D_{f,2})} & a^{-r(D_{f,1})-s(D_{f,1})} \\
& \cdot  P_{H}(q,q,D_{f,1})P_{H}(a,q,D_{f,2}) \Big] = P_{H}(qa,q,D)
\end{split}  
\end{equation}

\noindent
where the signs of the Seifert circles in both $D_{f,1}$ and $D_{f,2}$ are given by (\ref{signs}) relative to the marked edge $e_{0}$, even though $e_{0}$ always belongs to $D_{f,2}$. The quantities $s(D_{f,i})$, defined to be the sum of the signs of the crossings in $D$ with at least one adjacent edge labeled $i$, stems from the fact that Jaeger's composition product came with factors of $a^{w(D)}$. Note that $s(D_{f,1})=w(D)-w(D_{f,2})$ and  $s(D_{f,2})=w(D)-w(D_{f,1})$.

The proof of this equality is identical to the proof for Jaeger's, since everything is the same locally. The only differences (aside from the notational difference of removing the shifts by $w(D)$) are that our labelings require that $f(e_{0})=2$ and we don't have the factor of $\frac{a-a^{-1}}{q-q^{-1}}$. Jaeger's calculations show that our construction satisfies the correct skein relation, and that it is invariant under Reidemeister moves that take place away from the marked edge $e_{0}$. By the equivalence of knots and (1,1) tangles, these are the only Reidemeister moves we need to show invariance. Thus, to complete the proof, we just need to check that the formula holds on the base case of the unknot.

There is only one labeling that contributes for the unknot. Since there is only one edge, it must be the marked edge $e_{0}$, and $f(e_{0})=2$. For this labeling $f$, $r(D_{f,1})=r(D_{f,2})=s(D_{f,1})=s(D_{f,2})=0$, $P_{1}(\phi)=\frac{q-q^{-1}}{q-q^{-1}}=1$, and $P_{H}(unknot)=1$, so (\ref{compred}) becomes $1 \cdot 1=1$. This establishes the base case, which proves the formula.

\subsubsection{An Example: The Right Handed Trefoil}
\begin{figure}[h!]

 \centering
   \begin{overpic}[width=.25\textwidth]{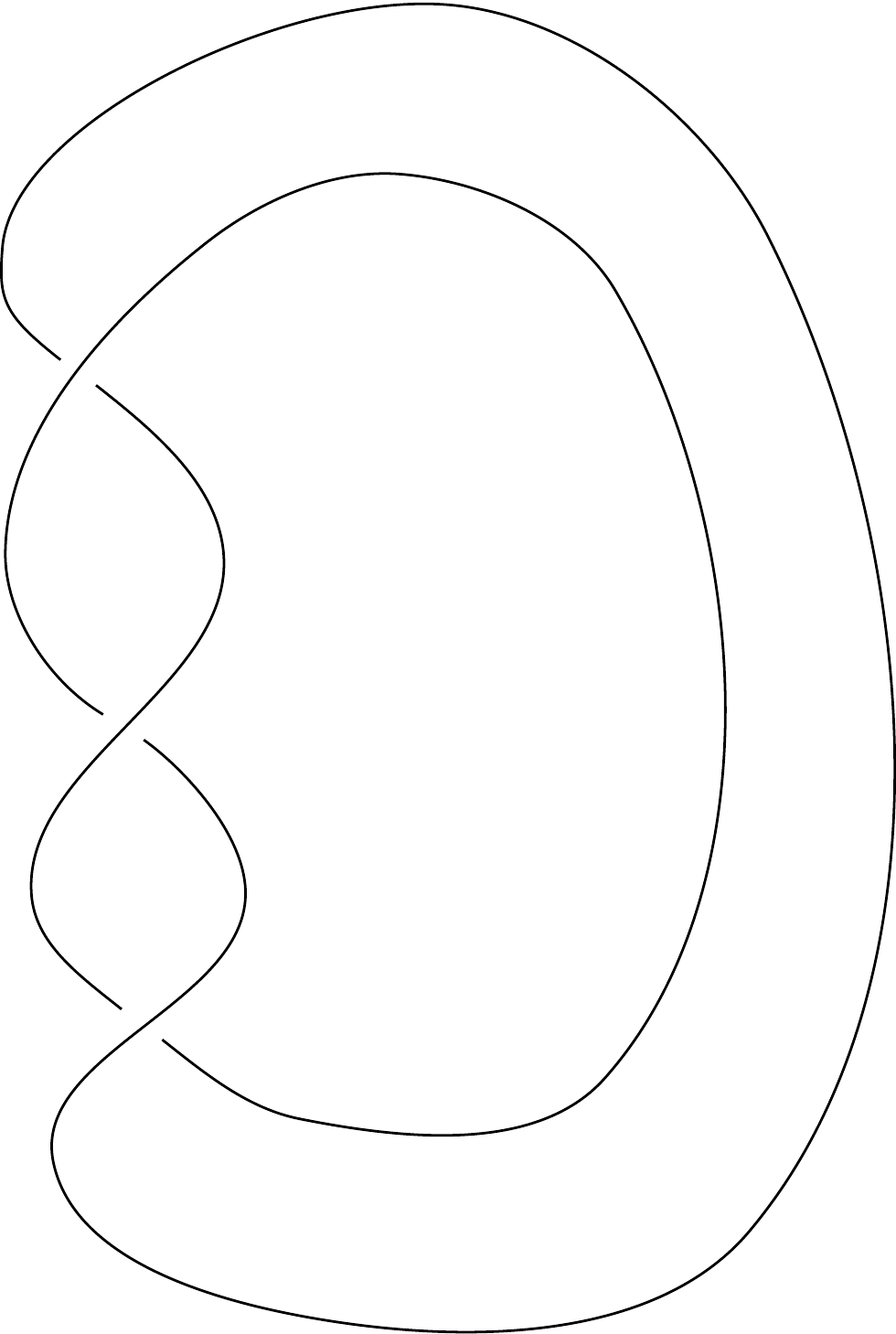}
   \put(-8,83){$e_{0}$}
   \put(10,83){$e_{1}$}
   \put(-7,58){$e_{2}$}
   \put(19,58){$e_{3}$}
   \put(-6,32){$e_{4}$}
   \put(20,32){$e_{5}$}
   \put(-1.5,79){$\circ$}
   \end{overpic}
   
\caption{Braid Diagram for the Right Handed Trefoil}

\end{figure}

This diagram $D$ has four local cycles which we will describe in terms of the edges in $f^{-1}(1)$, since this set uniquely characterizes $f$. These four sets are $\emptyset$, $e_{1}e_{2}e_{5}$, $e_{1}e_{3}e_{4}$, and $e_{1}e_{3}e_{5}$. Their contributions are listed in the table below. The sum of these contributions is $a^{-2}+a^{-2}q^{-4}-a^{-4}q^{-4}$, which is equal to $P_{H}(aq,q,D)$. 

\begin{center}
  \begin{tabular}{ l | c }

    Cycle & Contribution  \\ \hline
    $\emptyset$ & $q^{-4}(a^{-2}q^{2}+a^{-2}q^{-2}-a^{-4})$ \\ \hline
    $e_{1}e_{2}e_{5}$ & $(q-q^{-1})q^{-3}a^{-2}$  \\ \hline
    $e_{1}e_{3}e_{4}$ & $(q-q^{-1})q^{-3}a^{-2}$  \\ \hline
    $e_{1}e_{3}e_{5}$ & $(q-q^{-1})^{3}q^{-3}a^{-2}$  \\ \hline
    \bf{Total} & $a^{-2}+a^{-2}q^{-4}-a^{-4}q^{-4}$ \\ \hline

  \end{tabular}
\end{center}

\subsection{HOMFLY-PT Homology and the $E(-1)$ Spectral Sequence}

\subsubsection{HOMFLY-PT Homology}

In this section we will give a description of HOMFLY-PT homology similar to that of Rasmussen in \cite{Rasmussen}, with the same grading conventions. Let $L$ be a link in $S^{3}$, and $D$ a connected braid diagram for $L$, oriented clockwise. We view $D$ as an oriented 4-valent graph with each vertex decorated with $+$ (positive crossing) or $-$ (negative crossing). 

If $e_{1},...,e_{n}$ are the edges of $D$, let $X_{1},...,X_{n}$ be corresponding indeterminates. At each crossing $c$, we have outgoing edges $e_{i(c)},e_{j(c)}$ and incoming edges $e_{k(c)},e_{l(c)}$. We define the ground ring $R$ by 

\begin{equation}
R=\Z_{2}[X_{1},...,X_{n}]/I
\end{equation}  

\noindent
where $I$ is the ideal generated by $\{X_{i(c)} +X_{j(c)} + X_{k(c)} +X_{l(c)} \}$ over all crossings $c$. The ring $R$ comes equipped with an internal $q$-grading given by $q(X_{i})=2$. We are using $\Z_{2}$ coefficients instead of $\Q$ coefficients so that we don't have to count signs in the knot Floer context. The fact that the resulting homology is still an invariant follows from \cite{Krasner}. Although he only proves that HOMFLY-PT homology gives an invariant with integer coefficients, the argument extends to $\Z_{2}$ via the universal coefficient theorem.

We are now ready to define our complex. There will be three gradings: $gr_{q}$ (the $q$-grading),  $gr_{h}$ (twice the horizontal grading), and $gr_{v}$ (twice the vertical grading). Let $R\{i,j,k\}$ denote the ring $R$ shifted by $i,j,k$ in $gr_{q},gr_{h},gr_{v}$, respectively. Define the complex for positive an negative crossings as in Figures \ref{pos11} and \ref{neg11}.

\vspace{5 mm}

\begin{figure}[!h]
\centering
\begin{tikzpicture}
  \matrix (m) [matrix of math nodes,row sep=5em,column sep=6em,minimum width=2em] {
     R\{0,-2,0\} & R\{0,0,0\} \\
     R\{2,-2,-2\} & R\{0,0,-2\} \\};
  \path[-stealth]
    (m-2-1) edge node [left] {$C_{H}(D_{+})= \hspace{20mm} X_{j}+X_{k}$} (m-1-1)
    (m-1-1) edge node [above] {$X_{k}+X_{i}$} (m-1-2)
    (m-2-2) edge node [right] {$1 \hspace{20mm}$} (m-1-2)
    (m-2-1) edge node [above] {$X_{i}X_{j}+X_{k}X_{l}$} (m-2-2);
\end{tikzpicture}
\caption{The HOMFLY-PT complex for a positive crossing}\label{pos11}
\end{figure}

\vspace{5 mm}

\begin{figure}[!h]
\centering
\begin{tikzpicture}
  \matrix (m) [matrix of math nodes,row sep=5em,column sep=6em,minimum width=2em] {
     R\{0,-2,2\} & R\{-2,0,2\} \\
     R\{0,-2,0\} & R\{0,0,0\} \\};
  \path[-stealth]
    (m-2-1) edge node [left] {$C_{H}(D_{-})= \hspace{31mm}  1$} (m-1-1)
    (m-1-1) edge node [above] {$X_{i}X_{j}+X_{k}X_{l}$} (m-1-2)
    (m-2-2) edge node [right] {$X_{j}+X_{k} \hspace{9mm}$} (m-1-2)
    (m-2-1) edge node [above] {$X_{k}+X_{i}$} (m-2-2);
\end{tikzpicture}
\caption{The HOMFLY-PT complex for a negative crossing}\label{neg11}
\end{figure}

To get the total complex $C_{H}(D)$, we just take the tensor product (over $R$) over all the crossings in $D$.

\begin{equation}
C_{H}(D)= \bigotimes_{c} C_{H}(D_{c})       
\end{equation}

Note that each tensorand $C_{H}(D_{c})$ admits a horizontal and a vertical filtration. Let $d_{+}$ denote the differential consisting of all horizontal arrows and $d_{v}$ the differential of all vertical arrows. We see that $d_{+}$ is homogeneous of degree $\{2,2,0\}$ and $d_{v}$ is homogeneous of degree $\{0,0,2\}$. The total differential $d_{h}+d_{v}$ is not homogeneous with respect to the three gradings, and therefore does not define a triply graded homology theory. Instead, we do the following:

\begin{defn}

The middle HOMFLY-PT homology $H(L)$ of a link $L$ is given by 

\begin{equation}
 H_{H}(L) = H_{\ast}(H_{\ast}(C_{H}(D), d_{+}),d_{v}^{\ast}) \{ -w+b-1, w+b-1, w-b+1\}  
 \end{equation}

\noindent
where $w$ is the writhe of $D$ and $b$ is the number of strands in the braid.

\end{defn}

From this complex we can define the reduced HOMFLY-PT homology $\overline{H}(L)$ by setting one of the $X_{i}$ equal to $0$, or equivalently tensoring $C_{H}(D)$ with the reducing complex

\[ R \{2, 0, -2\}  \xrightarrow{\hspace{3mm}X_{i}  \hspace{3mm} } R \{0, 0, 0\} \]

\noindent
It was shown by Khovanov and Rozansky in \cite{KR2} that $H(L)$ and $\overline{H}(L)$ are link invariants.

\subsubsection{The $E_{k}(-1)$ Spectral Sequence}
\label{h-1}

In \cite{Rasmussen}, Rasmussen identifies a collection of spectral sequences $E_{k}(N)$ for $N \ge 1$ from HOMFLY-PT homology to $sl(N)$ homology. He conjectures that there is a symmetry on HOMFLY-PT homology that would give $E_{k}(-N)$ spectral sequences as well, and constructs an $E_{k}(-1)$ spectral sequence that seems to be an example of this symmetry. In this section we will give a description of this spectral sequence.

We mentioned above that the total differential $d_{+}+d_{v}$ was not homogeneous with respect to all three gradings. However, it still defines a bigraded homology theory.

Let $gr_{M}= \frac{1}{2} (gr_{h}-gr_{v}-2q)$ and $gr_{A}=\frac{1}{2} (gr_{h}-q)$. Then $d_{+}+d_{v}$ is homogeneous of degree $-1$ with respect to $gr_{M}$ and homogeneous of degree $0$ with respect to $gr_{A}$ - let $R\{i,j\}$ denote the ring $R$ shifted by $i,j$ in $gr_{M},gr_{A}$. We define the \emph{unfiltered} HOMFLY-PT complex of a diagram $D$ 

\begin{equation}
C_{-1}(D)=(C_{H}(D), d_{+}+d_{v}) \{w, w\}      
\end{equation}

\noindent
and the $sl_{-1}$ homology by $H_{-1}(L)=H_{\ast}(C_{-1}(D))$. Then HOMFLY-PT homology is constructed as the $E_{2}$ page of the spectral sequence on $C_{-1}(D)$ induced by the vertical filtration, which converges to $H_{-1}(L)$. The same is true in the reduced case. 

\begin{lem}[\hspace{1sp}\cite{Rasmussen}] The $sl_{-1}$ homology of a k-component link is isomorphic to the HOMFLY-PT homology of the k-component unlink.

\end{lem}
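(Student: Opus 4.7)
The plan is to prove the lemma by establishing that $H_{-1}$ is invariant under crossing changes, then reducing the computation to a crossing-free diagram of the unlink. The guiding principle is that at each crossing the total differential $d_+ + d_v$ contains an isomorphism arrow, so Gaussian elimination should drastically simplify the local complex in a sign-symmetric way.

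First I would exploit the fact that $C_H(D_c)$ at each crossing contains an arrow that is multiplication by $1$: the arrow $C\to D$ at a positive crossing (Figure \ref{pos11}) and the arrow $A\to B$ at a negative crossing (Figure \ref{neg11}). Using the identity $X_{i(c)} X_{j(c)} + X_{k(c)} X_{l(c)} = (X_{j(c)} + X_{k(c)})(X_{i(c)} + X_{k(c)})$ in $R$ (which holds modulo the linear crossing relation $X_{i(c)} + X_{j(c)} + X_{k(c)} + X_{l(c)} = 0$), I would perform a change of basis on the remaining summands so that the isomorphism splits off as an acyclic direct summand. The residual complex is the 2-term Koszul factor $R \xrightarrow{X_{j(c)} + X_{k(c)}} R$, placed at cube position $B$ in the positive case and at cube position $D$ in the negative case.

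Second, I would check the bigrading carefully. A direct computation using $gr_M = \tfrac12(gr_h - gr_v - 2gr_q)$ and $gr_A = \tfrac12(gr_h - gr_q)$ shows that the generator of the simplified complex sits at $(gr_M, gr_A) = (-1,-1)$ for a positive crossing and at $(1,1)$ for a negative crossing—a difference of $(2,2)$ which matches exactly the change in $\{w,w\}$ when a crossing is flipped. Thus, after incorporating the writhe normalization in the definition of $C_{-1}(D)$, each crossing contributes the same factor to $H_{-1}$ independently of its sign. Since the local Gaussian eliminations are made via $R$-linear base changes and $R$ is flat over itself, these homotopy equivalences tensor together to yield a global equivalence, so $H_{-1}(L)$ is invariant under changing the sign of any single crossing.

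With crossing-change invariance in hand, the reduction step is short: any $k$-component link can be turned into the $k$-component unlink by a finite sequence of crossing changes (the unknotting number is finite, and crossing changes preserve the underlying braid permutation and hence the component count). So $H_{-1}(L) \cong H_{-1}(\text{unlink}_k)$. For the $k$-component unlink presented by a crossing-free diagram, the HOMFLY-PT complex $C_H$ has no differentials at all, so $H_{-1}(\text{unlink}_k) = C_H(\text{unlink}_k)$, and the vertical-filtration spectral sequence defining $H_H$ collapses immediately to the same answer, giving $H_{-1}(\text{unlink}_k) \cong H_H(\text{unlink}_k)$. The hard part will be the bigrading bookkeeping in the crossing-change step: one must verify that the writhe normalization $\{w,w\}$ exactly absorbs the $(2,2)$ shift between the positive and negative local models, and that the local Gaussian eliminations, which use ring elements shared between adjacent crossings, genuinely assemble into a global chain-homotopy equivalence on the tensor product over $R$.
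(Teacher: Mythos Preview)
Your Gaussian elimination step is fine and matches the paper exactly: cancelling the $1$ arrow at each crossing leaves a single Koszul factor $R\xrightarrow{X_{j}+X_{k}}R$, and after absorbing the $\{w,w\}$ shift the positive and negative cases coincide. So the complex indeed depends only on the underlying $4$-valent graph, not on the crossing signs.

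The gap is in the reduction step. Crossing changes do \emph{not} produce a crossing-free diagram: they take your braid diagram $D$ to another braid diagram $D'$ on the same number of strands with the same crossings (only the signs have changed). Your computation ``for the $k$-component unlink presented by a crossing-free diagram'' therefore does not apply to $D'$. To pass from $D'$ to the trivial $k$-strand braid you would need Reidemeister moves (and Markov destabilization), i.e.\ you would need to know that $H_{-1}$ is a link invariant --- but that is exactly what the lemma is meant to establish, so invoking it here is circular.

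What the paper does instead is compute the homology of the simplified complex $\bigotimes_{c}\bigl(R\xrightarrow{X_{j(c)}+X_{k(c)}}R\bigr)$ directly, for the given diagram. The key observation you are missing is that $e_{j(c)}$ and $e_{k(c)}$ are diagonal at the crossing $c$ and therefore lie on the same component of the link; together with the defining relations $X_{i}+X_{j}+X_{k}+X_{l}=0$ of $R$, the Koszul relations $X_{j}+X_{k}$ identify all edge variables on each component, with exactly one redundancy per component beyond the first. This gives $H_{-1}(L)\cong\Z_{2}[X_{1},\dots,X_{n}]\otimes V_{-}^{\,n-1}$ for an $n$-component link, which one recognizes as $H_{H}$ of the $n$-component unlink. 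Once you have done the local cancellation, this global step is the actual content of the lemma; your crossing-change argument, while correct, is a detour that does not avoid it.
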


\begin{proof}

We will start by rewriting the complexes $C_{H}(D_{+})$ and $C_{H}(D_{-})$ in terms of our new gradings:

\begin{figure}[!h]
\centering
\begin{tikzpicture}
  \matrix (m) [matrix of math nodes,row sep=5em,column sep=6em,minimum width=2em] {
     R\{-1,-1\} & R\{0,0\} \\
     R\{-2,-2\} & R\{1,0\} \\};
  \path[-stealth]
    (m-2-1) edge node [left] {$C_{H}(D_{+})= \hspace{20mm} X_{j}+X_{k}$} (m-1-1)
    (m-1-1) edge node [above] {$X_{k}+X_{i}$} (m-1-2)
    (m-2-2) edge node [right] {$1\hspace{20mm}$} (m-1-2)
    (m-2-1) edge node [above] {$X_{i}X_{j}+X_{k}X_{l}$} (m-2-2);
\end{tikzpicture}
\end{figure}

\begin{figure}[!h]
\centering
\begin{tikzpicture}
  \matrix (m) [matrix of math nodes,row sep=5em,column sep=6em,minimum width=2em] {
     R\{-2,-1\} & R\{1,1\} \\
     R\{-1,-1\} & R\{0,0\} \\};
  \path[-stealth]
    (m-2-1) edge node [left] {$C_{H}(D_{-})= \hspace{31mm} 1$} (m-1-1)
    (m-1-1) edge node [above] {$X_{i}X_{j}+X_{k}X_{l}$} (m-1-2)
    (m-2-2) edge node [right] {$X_{j}+X_{k} \hspace{9mm}$} (m-1-2)
    (m-2-1) edge node [above] {$X_{k}+X_{i}$} (m-2-2);
\end{tikzpicture}
\end{figure}

We will proceed by cancelling the 1 arrows in both complexes, giving resulting complexes

\[ R\{-2,-2\} \xrightarrow{X_{j}+X_{k}} R\{-1,-1\}  \hspace{2mm}\text{    and    }\hspace{2mm} R\{0,0\} \xrightarrow{X_{j}+X_{k}} R\{1,1\}    \]

\noindent
for the positive and negative crossings, respectively. We can remove the overall grading shift of $\{w,w\}$ by modifying them to 

\[ R\{-1,-1\} \xrightarrow{X_{j}+X_{k}} R\{0,0\}  \hspace{2mm}\text{    and    }\hspace{2mm} R\{-1,-1\} \xrightarrow{X_{j}+X_{k}} R\{0,0\}    \]

Since $e_{j}$ and $e_{k}$ are positioned diagonally at the crossing, they lie on the same component. Together with the relations $X_{i}+X_{j}+X_{k}+X_{l}$, they serve to identify all of the edges of each component, with one redundancy for each component beyond the first. Thus, if our link $L$ has $n$ components and we choose an ordering of the edges such that $X_{1},...,X_{n}$ all lie on different components, then we get

\begin{equation}
H_{-1}(L)= \Z_{2} [X_{1},...,X_{n}] \otimes V_{-}^{n-1} 
\end{equation}

\noindent
where $V_{-}= \Z\{0,0\} \bigoplus \Z\{-1,-1\}$. This is precisely the HOMFLY-PT homology of the n-component unlink. To reduce, we simply set $X_{1}=0$ so that

\begin{equation}
\overline{H}_{-1}(L)= \Z_{2} [X_{2},...,X_{n}] \otimes V_{-}^{n-1} 
\end{equation}

\end{proof}

\begin{rem}
While HOMFLY-PT homology has only been proved to be invariant under braidlike Reidemeister moves, it is a valid construction for any diagram $D$. It follows that the $E_{k}(-1)$ spectral sequence is  well defined for non-braid diagrams and will converge to the homology described above.
\end{rem}

\subsection{The Oriented Cube of Resolutions for Knot Floer Homology}
\subsubsection{Defining the Cube of Resolutions}

In this section we will give a brief review of the oriented cube of resolutions for knot Floer homology, introduced with twisted coefficients by Ozsv\'{a}th and Szab\'{o} in \cite{Szabo}. In this paper, they mention a similarity between the specialization to $t=1$ in this setting and HOMFLY-PT homology, which is explored in more detail by Manolescu in \cite{Manolescu}. In both cases, the knot is in decorated braid position to make maps between cycles more well-behaved - we will make the same assumption.

So let $K$ be a knot in $S^{3}$, and $D$ a braid projection for $K$ with one marked edge $e_{0}$, i.e. a decorated braid projection. To each crossing in $D$, we assign the Heegaard diagram shown in Figure \ref{HDCrossing}. Note that if we place $X$'s at $A^{0}$ and $A^{-}$, we get the Heegaard diagram for a negative crossing, and if we place them at $A^{0}$ and $A^{+}$, we get the diagram for a positive crossing. We also stabilize along the edges as necessary (or add insertions, in the language of \cite{Manolescu}), and since the diagram is in $S^{2}$, we need to leave out an $\alpha$ curve and a $\beta$ curve to make it balanced. We do this at the marked edge, as shown in Figure \ref{HDMarkedEdge}. 

\begin{figure}[h!]

 \centering
   \begin{overpic}[width=.75\textwidth]{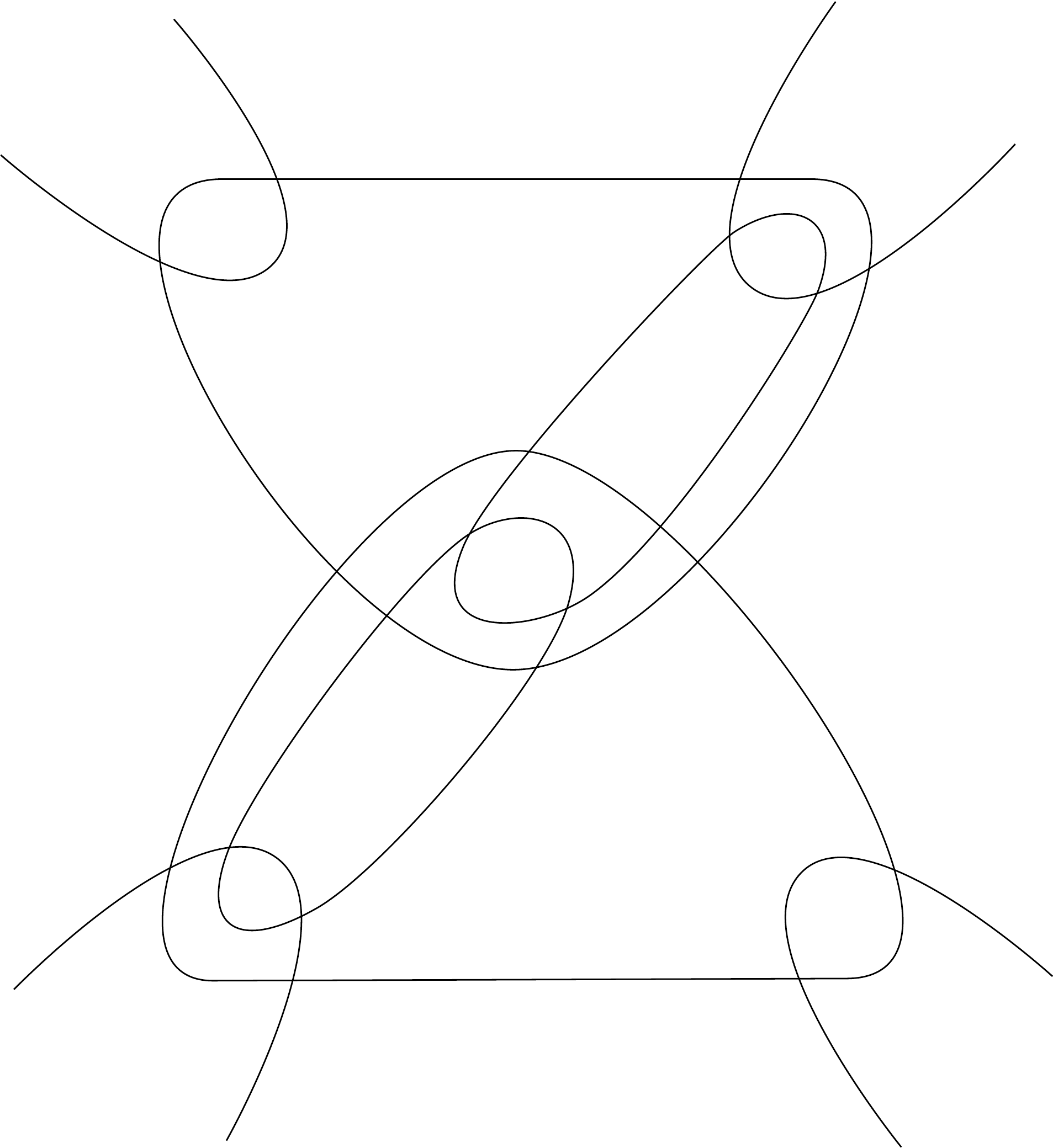}
   \put(39,54){$x$}
   \put(40.4,53){$\bullet$}
   \put(49.3,45){$x'$}
   \put(48.5,46.3){$\bullet$}
   \put(17.6,79){$O_{1}$}
   \put(66,77){$O_{2}$}
   \put(48,55.7){$B$}
   \put(43,49.7){$A_{0}$}
   \put(38.2,44){$B$}
   \put(34,52){$A_{-}$}
   \put(53,48){$A_{+}$}
   \put(21,21.6){$O_{3}$}
   \put(71,19){$O_{4}$}
   \put(43,86){$\alpha_{1}$}
   \put(50,70){$\alpha_{2}$}
   \put(45,12){$\beta_{1}$}
   \put(40,30.4){$\beta_{2}$}
   \end{overpic}
\caption{The Diagram at a Crossing}\label{HDCrossing}
\end{figure}

\begin{figure}[h!]

 \centering
   \begin{overpic}[width=.6\textwidth]{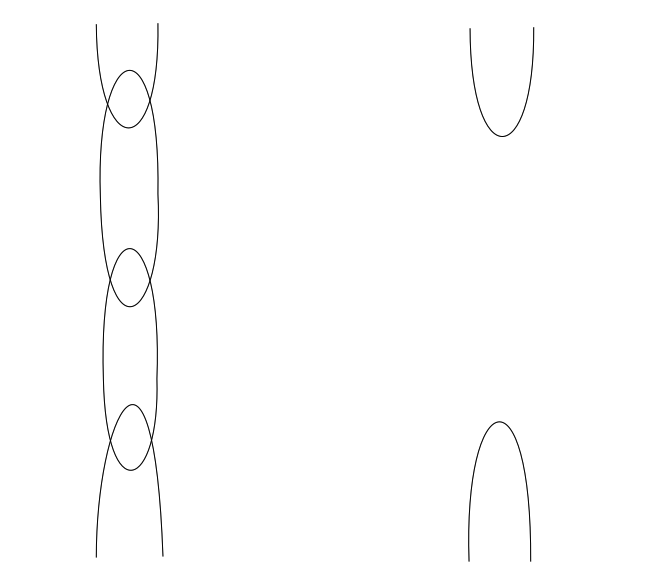}
   \put(17.8,72.5){$O_{1}$}
   \put(75,71){$O_{1}$}
   \put(18.1,44.7){$X$}
   \put(75,44.7){$X$}
   \put(17.9,20){$O_{2}$}
   \put(74.6,18.5){$O_{2}$}
   \end{overpic}
\caption{Heegaard diagrams for an unmarked edge (left) and a marked edge (right)}   \label{HDMarkedEdge}
\end{figure}

Suppose we have a negative crossing, so the $X$ basepoints are at $A^{0}$ and $A^{-}$. It is clear that the set of generators which have $x$ as a coordinate make a subcomplex - call it $X$, and let the quotient complex be $Y$. Then the complex shown in Figure \ref{negcomp} computes the knot Floer homology of $K$, where  $\Phi_{A^{-}}$ counts discs with multiplicity 1 at $A^{-}$ or $A^{0}$ and 0 at $B$, $\Phi_{B}$ counts discs with multiplicity 1 at one of the $B$'s and 0 at $A^{-}$ and $A^{0}$, and $A^{-}B$ counts discs with multiplicity 1 at $A^{-}$ or $A^{0}$ and multiplicity 1 at one of the $B$'s.

\begin{figure}[!h]
\centering
\begin{tikzpicture}
  \matrix (m) [matrix of math nodes,row sep=5em,column sep=6em,minimum width=2em] {
     X & X \\
     Y & X \\};
  \path[-stealth]
    (m-1-1) edge node [left] {\hspace{ 15mm} $\Phi_{A^{-}}$} (m-2-1)
            edge node [above] {1} (m-1-2)
            edge node [right]{$\Phi_{A^{-}B}$} (m-2-2)
    (m-2-1.east|-m-2-2) edge node [below] {$\Phi_{B}$} (m-2-2)
    (m-1-2) edge node [right] {$U_{1}+U_{2}+U_{3}+U_{4}$} (m-2-2);
\end{tikzpicture}
\caption{Complex for the Negative Crossing}
\label{negcomp}
\end{figure}

The quasi-isomorphism can be seen by looking at the vertical filtration, and canceling the top isomorphism. We are left with 

\[ Y \xrightarrow{\Phi_{B}} X \]

\noindent
which is precisely the knot Floer complex.

In order for this construction to make sense, we need two things to hold: first, we need the differential to satisfy $d^{2}=0$ - this was shown by Ozsv\'{a}th and Szab\'{o} in \cite{Szabo} by studying the ends of the Maslov index two holomorphic curves, which give the $U_{1}+U_{2}+U_{3}+ U_{4}$. Second, we need the diagrams to be admissible, which was shown by Manolescu in \cite{Manolescu}.

We can define a similar complex for the positive crossing, taking $X'$ to be the quotient complex of those generators which contain the intersection point $x'$, and $Y'$ the corresponding subcomplex. Then we get a quasi-isomorphism between the knot Floer complex and the one in Figure \ref{poscomp}, once again by imposing the vertical filtration and canceling the isomorphism.

\begin{figure}[!h]
\centering
\begin{tikzpicture}
  \matrix (m) [matrix of math nodes,row sep=5em,column sep=6em,minimum width=2em] {
     X' & Y' \\
     X' & X' \\};
  \path[-stealth]
    (m-1-1) edge node [left] {$U_{1}+U_{2}+U_{3}+U_{4}$} (m-2-1)
            edge node [above] {$\Phi_{B}$} (m-1-2)
            edge node [right]{$\Phi_{A^{+}B}$} (m-2-2)
    (m-2-1.east|-m-2-2) edge node [below] {1} (m-2-2)
    (m-1-2) edge node [right] {$\Phi_{A^{+}}$ \hspace{ 15mm}} (m-2-2);
\end{tikzpicture}
\caption{Complex for the Positive Crossing} \label{poscomp}
\end{figure}

To get the cube of resolutions, we apply the horizontal filtration to these complexes. This filtration corresponds to grading induced by the height in the cube. The complexes 

\[X \xrightarrow{U_{1}+U_{2}+U_{3}+U_{4}}X \hspace{2mm}\text{ and }\hspace{2mm} X' \xrightarrow{U_{1}+U_{2}+U_{3}+U_{4}} X'\]

\noindent
correspond to the singularization of the knot at this crossing, while 

\[ X \xrightarrow{\Phi_{A^{-}}} Y \hspace{2mm} \text{    and    } \hspace{2mm} Y' \xrightarrow{\Phi_{A^{+}}} X' \]

\noindent
correspond to the oriented smoothings. We denote the cube of resolutions complex by $(C_{F}(D), d)$. The differential now decomposes as 

\[ d = d_{0}+d_{1}+...+d_{k} \]

\noindent
where $d_{i}$ increases the cube grading by $i$.

\subsubsection{Generators and Cycles}

Before discussing our filtration, it is worth gaining an understanding of the generators in this complex. The condition that each $\alpha$ and $\beta$ curve have exactly one intersection point allows us to assign to each generator an oriented multi-cycle in the underlying oriented 4-valent graph of the projection $D$. We say that a generator contains an edge $e_{i}$ if it contains an intersection point on one of the small bigons containing $O_{i}$. The sets of intersection points and the corresponding local cycles are described below.

\begin{figure}[h!]

 \centering
   \includegraphics[width=.5\textwidth]{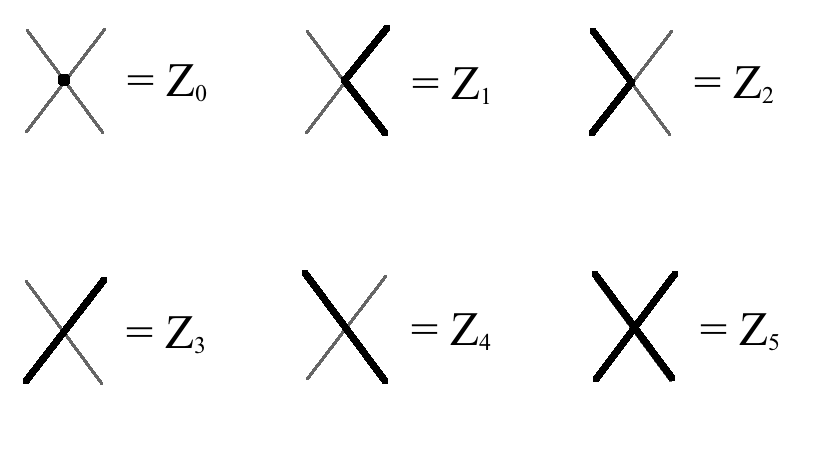}
   \caption{The local cycles at a crossing}
\end{figure}

\begin{figure}[h!]

 \centering
   \begin{overpic}[width=.75\textwidth]{initial_diagram.pdf}
   \put(38.6,54.6){$e_{1}$}
   \put(40.4,52.9){$\bullet$}
   \put(49.3,45){$e_{2}$}
   \put(48.5,46.3){$\bullet$}
   \put(17.6,79){$O_{1}$}
   \put(66,77){$O_{2}$}
   \put(48,55.7){$B$}
   \put(43,49.7){$A_{0}$}
   \put(38.2,44){$B$}
   \put(34,52){$A_{-}$}
   \put(53,48){$A_{+}$}
   \put(21,21.6){$O_{3}$}
   \put(71,19){$O_{4}$}
   \put(43,86){$\alpha_{1}$}
   \put(50,70){$\alpha_{2}$}
   \put(45,12){$\beta_{1}$}
   \put(40,30.4){$\beta_{2}$}
   \put(13.2,76.4){$\bullet$}
   \put(23.25,83.7){$\bullet$}
   \put(24,85.5){$c_{2}$}
   \put(11.5,75.2){$c_{1}$}
   \put(63.8,83.7){$\bullet$}
   \put(75,76){$\bullet$}
   \put(61.5,85.5){$a_{1}$}
   \put(76.5,75.5){$a_{2}$}
   \put(60.4,79.9){$b_{1}$}
   \put(71.2,71.8){$b_{2}$}
   \put(62.7,78.7){$\bullet$}
   \put(70.2,73.8){$\bullet$}
   \put(44,62.1){$d_{1}$}
   \put(58.5,53.6){$d_{2}$}
   \put(45.4,59.95){$\bullet$}
   \put(56.8,53.45){$\bullet$}
   \put(28.55,49.55){$\bullet$}
   \put(25.5,49.55){$f_{1}$}
   \put(60,50.3){$\bullet$}
   \put(62,50.3){$f_{2}$}
   \put(33,45.5){$\bullet$}
   \put(30,45.65){$g_{1}$}
   \put(46,41.2){$\bullet$}
   \put(47.2,40){$g_{2}$}
   \put(19.25,25.4){$\bullet$}
   \put(17,27.2){$h_{1}$}
   \put(25.4,19.55){$\bullet$}
   \put(26.5,18){$h_{2}$}
   \put(14.1,23.5){$\bullet$}
   \put(12.35,25.1){$i_{1}$}
   \put(24.8,14){$\bullet$}
   \put(25.9,12.34){$i_{2}$}
   \put(68.8,14.1){$\bullet$}
   \put(67,12.6){$j_{1}$}
   \put(77.15,23.3){$\bullet$}
   \put(78.5,25){$j_{2}$}
   \end{overpic}
   \caption{The Labeled Diagram}
   \label{labeled diagram}
\end{figure}

\begin{figure}
\begin{center}
  \begin{tabular}{ l | c }

    Cycles & Generators  \\ \hline
    $Z_{0}$ & $(d,g) \text{ and } (e,f)$ \\ \hline
    $Z_{1}$ & $(a,e,j)\text{ and } (b,g,j)$  \\ \hline
    $Z_{2}$ & $(c,d,h) \text{ and } (c,e,i)$ \\ \hline
    $Z_{3}$ & $(a,e,i) \text{, } (a,d,h)\text{, } (b,f,h) \text{, and } (b,g,i)$  \\ \hline
    $Z_{4}$ & $(c,e,j)$  \\ \hline
    $Z_{5}$ & $(b,c,h,j)$  \\ 
    \hline
  \end{tabular}
\end{center}
\caption{Generators corresponding to each local cycle}
\end{figure}

Every multi-cycle $Z$ except those which include the marked edge $e_{0}$ will have at least one corresponding generator. However, each vertex in the cube of resolutions will each only contain a subset of these generators - for example, if a vertex has a particular crossing smoothed, generators corresponding to $Z_{3}$ and $Z_{4}$ will not appear, while if the crossing is singularized, it is $Z_{5}$ that will be disallowed. The local cycles $Z_{0}$, $Z_{1}$, and $Z_{2}$ will appear in both the singularization and the smoothing, so there will be a non-trivial edge map involving each of these cycles. These are the interesting maps, and they will be discussed in Section \ref{section4}.

In order to draw connections between this complex and the composition product, we will utilize the bijection between multi-cycles and labelings. If $Z$ is a multi-cycle in $D$, then define $f_{Z}$ to be the labeling on $D$ given by 

\begin{equation}
 f_{Z}(e) = \begin{cases} 
      1 & \textrm{ if $e$ is in $Z$} \\
      2 & \textrm{ if $e$ is not in $Z$} \\
      
   \end{cases} 
\end{equation}

At the marked edge $e_{0}$, we are missing one $\alpha$ and one $\beta$ circle. It follows that there are no generators with corresponding cycles containing $e_{0}$, so we will alway have $f(e_{0})=2$, just like in our destabilized composition product formula.

\section{The Basepoint Filtration} \label{basepointsection}
We make our complex into a filtered complex by adding additional basepoints in all of the regions of our Heegaard Diagram that correspond to components of $\R^{2}-D$, labeled with points $p_{i}$ in the Figure \ref{markings}.

\begin{figure}[h!]
 \centering
   \begin{overpic}[width=.75\textwidth]{initial_diagram.pdf}
   \put(39,54){$x$}
   \put(40.4,53){$\bullet$}
   \put(49.3,45){$x'$}
   \put(48.5,46.3){$\bullet$}
   \put(17.6,79){$O_{1}$}
   \put(66,77){$O_{2}$}
   \put(48,55.7){$B$}
   \put(43,49.7){$A_{0}$}
   \put(38.2,44){$B$}
   \put(34,52){$A_{-}$}
   \put(53,48){$A_{+}$}
   \put(21,21.6){$O_{3}$}
   \put(71,19){$O_{4}$}
   \put(43,86){$\alpha_{1}$}
   \put(50,70){$\alpha_{2}$}
   \put(45,12){$\beta_{1}$}
   \put(40,30.4){$\beta_{2}$}
   \put(13,48){$\bullet$}
   \put(80,50){$\bullet$}
   \put(47,5){$\bullet$}
   \put(44,95){$\bullet$}
   \put(10,49){$p_{2}$}
   \put(77,51){$p_{3}$}
   \put(44,6){$p_{4}$}   
   \put(41,96){$p_{1}$}
   \end{overpic}
   \caption{Additional basepoints} \label{markings}
\end{figure}

\begin{lem}

These markings define a filtration on the complex $\mathit{CFK}^{-}(K)$, where the change in filtration level of a differential is given by the sum of the multiplicities of the corresponding holomorphic disc at these basepoints. This filtration does not depend on the location of the X's in the interior regions. 

\end{lem}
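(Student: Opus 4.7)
The plan is to define the filtration by assigning each generator the sum of multiplicities at the $p_i$ of a connecting Whitney disc to a reference generator, verify well-definedness via a periodic-domain analysis, and show that the differential weakly respects it by positivity of holomorphic curves. Concretely, within each $\mathrm{Spin}^c$ equivalence class of generators, I would fix a reference $x_0$ and, for any other generator $x$ in the class, set
\[
f(x) = \sum_i n_{p_i}(\phi),
\]
where $\phi$ is any topological Whitney disc from $x_0$ to $x$ satisfying $n_{X_j}(\phi) = 0$ for every interior $X$ basepoint.

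For well-definedness, two such candidate discs differ by a periodic domain $\mathcal{P}$ with $n_{X_j}(\mathcal{P}) = 0$ for every $X$. The Heegaard diagram here is balanced (one $\alpha$ and one $\beta$ curve having been removed at the marked edge) and describes $S^3$, so its periodic-domain lattice is generated by the closed surface $\Sigma$ itself. Since $\Sigma$ has multiplicity $1$ at every $X_j$, the only periodic domain with $n_X = 0$ is the zero domain, which makes $n_{p_i}(\phi)$ independent of the choice of $\phi$ and $f$ a well-defined $\Z$-valued function on generators.

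Next I would verify that $d$ weakly increases $f$. Every component of the differential --- both the internal Heegaard Floer differentials and the $\Phi$-maps running between cube vertices --- is a count of holomorphic curves with vanishing multiplicity at the interior $X$s. By positivity of intersections, such curves satisfy $n_{p_i}(\phi) \ge 0$ for every $i$, so whenever $y$ appears in $dx$ through $\phi$,
\[
f(y) - f(x) = \sum_i n_{p_i}(\phi) \ge 0.
\]
This is both the filtration property and the change-of-filtration formula stated. Independence from the placement of the $X$s then follows because the $p_i$ lie in the exterior regions corresponding to components of $\R^2 - D$, while the $X$s lie in the small interior regions created by the local Heegaard picture at each crossing; sliding an $X$ inside its region alters neither the set of domains with $n_X = 0$ nor the multiplicities $n_{p_i}$, so $f$ is unchanged.

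The main obstacle in executing this plan is the periodic-domain analysis underlying well-definedness. The full cube-of-resolutions complex is assembled from the local pieces of Figures \ref{negcomp} and \ref{poscomp}, and one must check uniformly that the marked-edge stabilization does indeed yield a balanced Heegaard diagram whose periodic-domain lattice is exhausted by multiples of $\Sigma$. For this I would rely on Manolescu's admissibility analysis in \cite{Manolescu} together with the original Ozsv\'ath--Szab\'o construction in \cite{Szabo}, which between them establish precisely the balance and admissibility properties needed.
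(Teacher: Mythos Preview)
Your overall structure is fine, but the periodic-domain step has a genuine gap. The assertion that the periodic-domain lattice is generated by $\Sigma$ alone is valid for a single-pointed genus-$g$ Heegaard diagram of $S^3$, but the diagram here is a \emph{multi-pointed} diagram on $S^2$. On a sphere every $\alpha$- and $\beta$-circle is separating, and the union of all regions lying on one side of such a circle is itself a periodic domain (the arcs of the transverse family on its boundary cancel in pairs). The lattice of periodic domains is therefore much larger than $\Z\cdot\Sigma$, and your inference that ``the only periodic domain with $n_X=0$ is the zero domain'' does not follow from $H_2(S^3)=0$. The references you invoke for balance and admissibility do not establish the rank-one claim either.

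The paper proves the needed vanishing by a direct geometric observation instead: for every individual $\alpha$- or $\beta$-circle in this planar diagram, the markings $p_i$ and the special $X$ at the marked edge lie on the same side of that circle. Hence any periodic domain has the same multiplicity at each $p_i$ as at the special $X$; since the discs counted by the differential avoid the special $X$, their differences have multiplicity zero at every $p_i$, and the filtration is well-defined. Because only the fixed special $X$ enters this argument, independence from where the remaining $X$'s are placed among the interior regions $A^0,A^\pm,B$ is immediate---your ``sliding an $X$ inside its region'' treats only an isotopy within a single region, not the intended statement about moving those $X$'s between different interior regions.
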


\begin{proof}

It is sufficient to show that any periodic domain has multiplicity zero at these markings. This follows from that fact that for any $\alpha$ or $\beta$ circle, the markings and the special X corresponding to the decorated edge lie on the same side. So for any periodic domain, the multiplicity at any of these points is the same as that of the X, which is required to be zero.

\end{proof}

This filtration extends to a filtration on the cube of resolutions complex $C_{F}(D)$, with the $1$ and $U_{1}+U_{2}+U_{3}+U_{4}$ maps preserving the filtration, and we can count multiplicities at the basepoints for $\Phi_{A}$, $\Phi_{B}$, and $\Phi_{AB}$. Define $d^{f}$ to be the differential in the cube of resolutions that preserves the basepoint filtration.

Let $C_{F}(Z_{i})$ denote the complex generated by the elements corresponding to the cycle $Z_{i}$.

\begin{lem}

The differential $d^{f}$ preserves $C_{F}(Z_{i})$, i.e. it does not change the underlying cycle of a generator.

\end{lem}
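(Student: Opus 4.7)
The plan is to analyze the domains of holomorphic discs that contribute to $d^{f}$. Any such disc $\phi$ has multiplicity zero at each of the additional basepoints $p_i$ by definition of the basepoint filtration. Since every connected component of $\R^{2}\setminus D$ (including the unbounded one) is witnessed by at least one of these basepoints, the domain $D(\phi)$ must be supported in a small neighborhood of the underlying $4$-valent graph of $D$: the only nonzero local contributions can come from edge neighborhoods (containing the $O_i$-bigons) and crossing neighborhoods (containing the $A^{0}, A^{\pm}, B$ regions).

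I would then show the preservation of the cycle edge-by-edge and crossing-by-crossing. For each edge $e_i$, the $\alpha$- or $\beta$-circle running transverse to $e_i$ separates the $O_i$-bigon from an adjacent region of $\R^{2}\setminus D$ carrying some basepoint $p_j$. The intersection point of a generator with that circle lies on one of exactly two points, and this choice of side records precisely whether $e_i \in Z$ or not. To flip this intersection point to the opposite side, $D(\phi)$ would need positive multiplicity in the region containing $p_j$, contradicting the basepoint condition; therefore the edge-component of $Z$ at each $e_i$ is preserved by $d^{f}$. At each crossing one carries out an analogous but slightly more intricate analysis using the four surrounding basepoints $p_1,\ldots,p_4$. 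Checking the cases of positive/negative crossing and singularized/smoothed resolution against the local cycles $Z_{0},\ldots,Z_{5}$, one sees that the only domains compatible with all four surrounding multiplicities being zero connect generators with the same local cycle. Such domains can still permute different generators sharing a common local cycle (e.g., within the four generators of $Z_{3}$), which is precisely what allows $d^{f}$ to be nontrivial on each $C_{F}(Z_i)$. Combining the edge and crossing checks yields $Z_x = Z_y$ for every $\phi$ contributing to $d^{f}$.

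The main obstacle is the case analysis at a crossing: there are two vertex types (positive/negative) and two resolutions (singular/smooth), each carrying a handful of local cycles, and for each configuration one must verify that no positive domain with vanishing multiplicity at the four surrounding $p_i$'s can connect generators belonging to different local cycles. Each such verification is a finite local computation in the Heegaard picture of Figure \ref{HDCrossing}, but the argument hinges on the placement of basepoints: because every planar region of $D$ contains at least one $p_i$, no local transition between distinct cycles can slip past the filtration. Without this arrangement, certain cycle-changing discs would be invisible to the filtration and the lemma would fail.
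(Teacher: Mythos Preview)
Your outline is correct and would yield a valid proof, but it takes a different route from the paper's argument. You work \emph{locally}: since a disc contributing to $d^f$ has multiplicity zero at every $p_i$, its domain is confined to the edge- and crossing-neighborhoods of the Heegaard diagram, and you then check case by case that no such domain connects generators with distinct local cycles. The paper instead argues \emph{globally}: each basepoint $p_i$ individually defines a $\Z$-filtration, and the tuple of all these filtration values, measured relative to a fixed generator with empty cycle, is exactly the tuple of multiplicities of the unique $2$-chain $C$ (normalized to vanish near the marked edge) with $\partial C = Z$. Thus the filtration level alone already determines $Z$, and since $d^f$ preserves every basepoint filtration it must preserve $Z$ --- no case analysis at all.

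The trade-off is clear. Your approach is more hands-on and makes the mechanism visible at the level of individual discs, but it costs you the crossing-by-crossing verification you flag as the main obstacle (and your edge argument as written is slightly imprecise: the relevant curves have more than two intersection points, so ``choice of side'' should really be ``which intersection point along the bigon boundary''; this is fixable but needs care). The paper's approach bypasses all of that by recognizing that the basepoint multiplicities are nothing but the coefficients of a $2$-chain bounding $Z$, turning the lemma into a one-line consequence of the injectivity of $Z \mapsto C_Z$. If you want the shortest proof, adopt the $2$-chain observation; if you want to see explicitly which discs survive, your local analysis is the right tool and is essentially what the paper carries out later in the appendix anyway.
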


\begin{proof}

Each basepoint gives a filtration on our complex corresponding to a region in the knot projection. Let $x$ be a generator with multi-cycle $Z$, and let $C$ be an oriented 2-chain with boundary $Z$. If we require that $C$ has multiplicity $0$ on the regions adjacent to the marked edge, it is clear that this 2-chain is unique.

Within the planar Heegaard diagram for $K$, we can find a disc that connects $x$ to a generator corresponding to the empty cycle (see \cite{Szabo}, section 3), and since each region in the knot projection contains a basepoint, the multiplicities of the disc at each basepoint will be equal to the multiplicity of $C$ in that region. Thus, each filtration level uniquely determines a 2-chain $C$, whose boundary gives the multi-cycle $Z$. Since no two multi-cycles correspond to the same 2-chain, this completes the proof.

\end{proof}

It follows that each multi-cycle is in its own filtration level, so the homology of the associated graded object is given by

\begin{equation}
 \bigoplus_{Z} H_{*}(C_{F}(Z), d^{f})  
\end{equation}

\noindent
In the next section we will compute the homology of an arbitrary multi-cycle $Z_{i}$ in terms of the types its local cycles it has at each positive and negative crossing.

\section{The Complex of a Labeling} \label{section4}

\subsection{Complete Resolutions}

Before discussing the whole complex corresponding to a labeling $f$, let's consider what the complex looks like at a vertex in the cube of resolutions, i.e. the complex corresponding to a complete resolution $S$. If $X$ denotes the set of singularized crossings in $S$, then this complex is given by 

\begin{equation}
\label{sing}
C_{F}(S) = \mathit{CFK}^{-}(S) \otimes (\bigotimes_{c \in X} R \xrightarrow{U_{i(c)}+U_{j(c)}+U_{k(c)}+U_{l(c)}} R )
\end{equation}

\noindent
where $\mathit{CFK}^{-}(S)$ denotes the complex coming from the planar Heegaard Diagram for $S$. This complex admits the same basepoint filtration, and the generators correspond to cycles in the same way as the complex for a knot. (They have to, since each generator in the complex for $S$ is also going to be a generator in the complex for $K$.) There are strictly fewer possible cycles in $S$ than in $K$, since if $c$ is singularized then there is no generator of type $Z_{5}$ at $c$, and if $c$ is smoothed then there are no generators of type $Z_{3}$ or $Z_{4}$ at $c$. As always, $Z$ can not include the marked edge $e_{0}$.

The complex corresponding to a cycle $Z$ is easy to compute. For each edge $e_{i}$ in $Z$, there are two choices for the intersection point in the Heegaard diagram, and they can be connected by a bigon containing $O_{i}$. This gives a Koszul complex on the $U_{i}$ for $e_{i}$ in $Z$, which form a regular sequence. We can therefore cancel all of these bigons, setting the corresponding $U_{i}$ equal to zero.

Consider the diagram obtained by deleting the edges in $Z$ from $S$ - call this $S-Z$. We claim that the homology corresponding to the cycle $Z$ is the HOMFLY-PT homology of the singular diagram $S-Z$. This can be seen by examining what happens locally for each possible local cycle at a singularized crossing. If $Z$ is the empty cycle $Z_{0}$, there are two possible generators, and they are connected by a pair of bigons that give us the quadratic map

\[R \xrightarrow{U_{i(c)}U_{j(c)}+U_{k(c)}U_{l(c)}} R\]

\noindent
If we quotient by the linear relations coming from the Koszul complex in (\ref{sing}), this is exactly the HOMFLY-PT complex of a singularization.

If $Z$ has two of the 4 edges at $c$, i.e. $Z=Z_{1}$, $Z_{2}$, $Z_{3}$, or $Z_{4}$, then after canceling the bigons corresponding to those two edges, we have just one generator. Setting those two edges equal to zero, the linear term $U_{i(c)}+U_{j(c)}+U_{k(c)}+U_{l(c)}$ is now just a sum of the remaining two edges. This is precisely the HOMFLY-PT complex assigned to bivalent vertex separating these two edges. Thus, we have the following:

\begin{thm}

If $K$ is a knot in decorated braid position, and $S$ is a complete resolution of $K$, then the homology corresponding to a cycle $Z$ in $K$ at the vertex of the cube of resolutions corresponding to $S$ is given by $H_{H}(S-Z)$. Thus, the basepoint filtered homology of (\ref{sing}) is given by 

\begin{equation}
\label{sing2}
H_{*}(C_{F}(S), d^{f}) \cong  \bigoplus_{Z} H_{H}(S-Z)
\end{equation}

\end{thm}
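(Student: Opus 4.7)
The plan is to combine the decomposition supplied by the basepoint filtration with the two-step simplification already sketched in the paragraphs preceding the statement, promoted to a clean proof.

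First, I would decompose the complex by cycle. The lemma in Section \ref{basepointsection} showing that $d^f$ preserves the underlying multi-cycle of a generator was stated for the knot diagram $D$, but its proof only uses that $D$ is a decorated planar projection with a basepoint in every region and a marked edge $e_0$; the same argument applies verbatim to the complete resolution $S$. Consequently $(C_F(S), d^f)$ splits as $\bigoplus_Z (C_F(S,Z), d^f)$, with $Z$ ranging over multi-cycles in $S$ avoiding $e_0$, and it suffices to identify $H_*(C_F(S,Z),d^f) \cong H_H(S-Z)$ for each fixed $Z$.

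With $Z$ fixed, I would carry out the two-step reduction. The first step is a Koszul cancellation: for each edge $e_i \in Z$ the two coordinate choices on the bigon at $e_i$ are joined by a holomorphic disc of multiplicity one at $O_i$, contributing the map $U_i$. These bigons lie on distinct edges, so they provide a Koszul complex on $\{U_i : e_i \in Z\}$. Each $U_i$ is an independent indeterminate of $R$ and the relations in the ideal $I$ at a singularized crossing always involve all four $U$'s at that crossing, so the restricted sequence remains regular; cancelling the bigons is therefore legitimate and is tantamount to setting $U_i = 0$ for $e_i \in Z$. The second step is local at each crossing $c$ of $S$. After the cancellation the local picture depends only on the local type of $Z$ at $c$, and one checks case by case, as begun in the excerpt, that the result is precisely the HOMFLY-PT local complex for the corresponding vertex of $S-Z$: the singularized $Z_0$ case recovers the HOMFLY-PT singular complex $U_{i(c)}U_{j(c)} + U_{k(c)}U_{l(c)}$ modulo the surviving linear relation, the singularized $Z_1, \ldots, Z_4$ cases each reduce to a bivalent-vertex complex $U_i + U_j$ on the two edges outside $Z$, and the smoothed cases $Z_0, Z_1, Z_2, Z_5$ are handled analogously. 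Tensoring over all crossings produces $H_H(S-Z)$, and summing over $Z$ yields (\ref{sing2}).

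The main obstacle is completing the local case analysis uniformly across smoothed as well as singularized crossings, in particular handling $Z_5$ at a smoothing (where all four edges of $c$ belong to $Z$ and therefore get killed by the cancellation) and checking that the surviving complex matches the HOMFLY-PT piece with the correct grading shifts. A secondary technical point is to justify that the tensor factorization over crossings is compatible with the basepoint filtration and with the Koszul cancellations of step one, so that the local identifications globalize without introducing cross-terms between distinct crossings; this amounts to a combinatorial check that the bigons being cancelled remain disjoint from the regions carrying the filtration and from the supports of any disc connecting different crossings.
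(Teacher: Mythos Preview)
Your proposal is correct and follows essentially the same two-step argument as the paper: decompose by multi-cycle via the basepoint filtration, cancel the bigon Koszul complex on the edges in $Z$, and then identify the resulting local complex at each crossing with the HOMFLY-PT local piece for $S-Z$. The only difference is that you are more explicit about the smoothed-crossing cases (including $Z_5$), which the paper handles implicitly since a smoothed crossing contributes no singular vertex to $S-Z$ and hence no nontrivial HOMFLY-PT factor; your worry about cross-terms between crossings is also a non-issue, since the cancellations are on disjoint bigons and the local complexes tensor over the polynomial ring $\Z_2[U_1,\ldots,U_n]$ in independent variables.
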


In terms of labelings, cycles $Z$ in $S$ are in bijection with labelings $f$ for which $H_{1}(S_{f,1})$ is non-trivial, with the bijection given by $Z \mapsto f_{Z}$ as it was previously for non-singular diagrams. Thus, (\ref{sing2}) can be rewritten as 

\begin{equation}
H_{*}(C_{F}(S), d^{f}) \cong \bigoplus_{f} H_{1}(S_{f,1}) \otimes H_{H}(S_{f,2})
\end{equation}

\subsubsection{An Example}

Let $S$ be the complete resolution given in Figure \ref{singularDiagram}. $S$ has three cycles - $\emptyset$, $e_{2}e_{3}$, and $e_{2}e_{4}$. There are two singular points, so the total complex will be given by $CFK^{-}(S)$ tensored with a Koszul complex on the two generators $U_{1}+U_{2}+U_{3}+U_{4}$ and $U_{3}+U_{4}+U_{5}+U_{2}$.

\begin{figure}[h!]

 \centering
   \begin{overpic}[width=.9\textwidth]{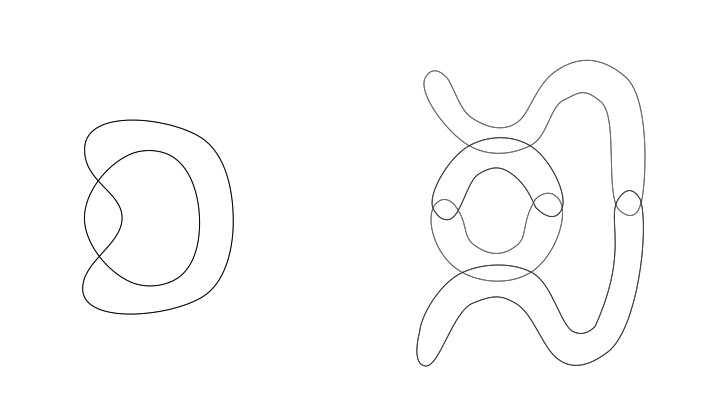}
   \put(9,35){$e_{1}$}
   \put(17,35){$e_{2}$}
   \put(9,25){$e_{3}$}
   \put(17,25){$e_{4}$}
   \put(59,43){$\scalebox{.65}O_{1}$}
   \put(85.5,27){$\scalebox{.65}O_{2}$}
   \put(60,26){$\scalebox{.65}O_{3}$}
   \put(74,26.75){$\scalebox{.65}O_{4}$}  
   \put(57.75,7){$\scalebox{.65}O_{5}$} 
   \put(67.3,34.75){$\scalebox{.65}{XX}$} 
   \put(67,17){$\scalebox{.65}{XX}$} 
   \put(11,33){$\circ$}
   \end{overpic}
\caption{A Complete Resolution and the Corresponding Heegaard Diagram}   \label{singularDiagram}
\end{figure}

The empty cycle $\emptyset$ has four generators in the diagram, and the total complex is given in Figure \ref{emptycycle}. It's clear that tensoring with the Koszul complex, we get the HOMFLY-PT complex corresponding to $S$. For the cycle $e_{2}e_{3}$, we get the complex in Figure \ref{e2e3}, so canceling those arrows sets $U_{2}=U_{3}=0$. The Koszul complex then sets $U_{1}=U_{4}=U_{5}$, so the homology is isomorphic to the HOMFLY-PT homology of the unknot. The cycle $e_{2}e_{4}$ is similar. 

\begin{figure}[h!]

\centering
\begin{tikzpicture}
  \matrix (m) [matrix of math nodes,row sep=5em,column sep=6em,minimum width=2em] {
     R & R \\
     R & R \\};
  \path[-stealth]
    (m-1-1) edge node [left] {$U_{1}U_{2}+U_{3}U_{4}$} (m-2-1)
            edge node [above] {$U_{3}U_{4}+U_{5}U_{2}$} (m-1-2)
    (m-2-1.east|-m-2-2) edge node [below] {$U_{3}U_{4}+U_{5}U_{2}$} (m-2-2)
    (m-1-2) edge node [right] {$U_{1}U_{2}+U_{3}U_{4}$} (m-2-2);
\end{tikzpicture}
\caption{The Empty Cycle}\label{emptycycle}
\vspace{10mm}
\end{figure}

\begin{figure}[h!]

\centering
\begin{tikzpicture}
  \matrix (m) [matrix of math nodes,row sep=5em,column sep=6em,minimum width=2em] {
     R & R \\
     R & R \\};
  \path[-stealth]
    (m-1-1) edge node [left] {$U_{2}$} (m-2-1)
            edge node [above] {$U_{3}$} (m-1-2)
    (m-2-1.east|-m-2-2) edge node [below] {$U_{3}$} (m-2-2)
    (m-1-2) edge node [right] {$U_{2}$} (m-2-2);
\end{tikzpicture}
\caption{The Cycle $e_{2}e_{3}$}\label{e2e3}
\end{figure}

The filtered homology at the vertex in the cube of resolutions corresponding to $S$ is given by 

\[H_{H}(S) \oplus H_{H}(U) \oplus  H_{H}(U) \] 

\noindent
where $U$ is the unknot.

\subsection{The Whole Complex}

Let $c_{R}^{+}$ and $c_{R}^{-}$ denote the set of positive crossings and negative crossings, respectively, at which $Z$ has the local cycle $Z_{1}$ - we will call these `right turns.' Similarly, define $c_{L}^{+}$ and $c_{L}^{-}$ to be the crossings at which $Z$ has the local cycle $Z_{2}$, or `left turns.'

\begin{thm}

\label{cyclecomp}

The complex $C_{F}(Z)$ corresponding to a multi-cycle $Z$ is acyclic if $f_{Z}$ is not admissible or if $Z$ contains the marked edge $e_{0}$. Otherwise,

\[
C_{F}(Z)=(C_{H}(D_{f_{Z},2}), d_{+}+d_{v}) \otimes (\bigotimes_{c \in c_{R}^{+}} R\xrightarrow{U_{i(c)}} R) \otimes   (\bigotimes_{c \in c_{L}^{-}} R \xrightarrow{U_{l(c)}} R) 
\]

\end{thm}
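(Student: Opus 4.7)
The plan is to work crossing-by-crossing, using the tensor product structure of the cube of resolutions and the fact (established in the previous lemma) that $d^f$ preserves the underlying multi-cycle of a generator. Thus $C_F(Z)$ splits off inside $C_F(D)$ as the subcomplex on generators with multi-cycle $Z$, and we can analyze how each crossing of $D$ contributes based on the local cycle of $Z$ there. The case where $Z$ contains $e_0$ is trivial: because the Heegaard diagram near the marked edge is missing the relevant $\alpha$ and $\beta$ curves, no generators exist with $e_0 \in Z$, so $C_F(Z) = 0$ and is vacuously acyclic.

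The body of the proof is a case analysis by local cycle at each crossing $c$, noting that the cube-of-resolutions complex at $c$ is a two-term complex between singularization and smoothing. For $Z_0$ at $c$ (empty local cycle), both resolutions contribute generators, and after canceling bigons for the edges of $Z$ elsewhere, the local complex becomes the HOMFLY-PT crossing complex from Figure \ref{pos11} or Figure \ref{neg11}, matching the contribution of $c$ to $C_H(D_{f_Z,2})$. For $Z_3$ or $Z_4$ at $c$, only the smoothing contributes generators, and after bigon cancellation the local piece becomes the HOMFLY-PT bivalent-vertex complex, corresponding to the bivalent vertex in $D_{f_Z,2}$ between the two edges at $c$ not in $Z$. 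For $Z_5$ at $c$, only the singularization contributes, and since all four edges at $c$ are in $Z$, the bigon cancellations reduce the local complex to the HOMFLY-PT complex at a singular vertex, matching the fact that no edges at $c$ appear in $D_{f_Z,2}$. The delicate case is $Z_1$ or $Z_2$ at $c$: both resolutions contribute, and after bigon cancellations one must compute the edge maps $\Phi_{A}$, $\Phi_{B}$, and $\Phi_{AB}$ explicitly, identifying the resulting two-term complex with $R \xrightarrow{U_{i(c)}} R$ when $Z_1$ appears at a positive crossing (a right turn at a positive crossing), with $R \xrightarrow{U_{l(c)}} R$ when $Z_2$ appears at a negative crossing (a left turn at a negative crossing), and with an acyclic complex when the turn has the opposite sign (exactly the non-admissible cases from Figure \ref{nonadmissible}).

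The main obstacle will be this last case, since identifying the edge maps between the singularization and smoothing vertices in the $Z_1$ and $Z_2$ scenarios requires a careful local computation of the holomorphic disc counts contributing to $\Phi_{A}$, $\Phi_{B}$, and $\Phi_{AB}$, and showing that admissibility corresponds precisely to non-acyclicity. Once this local analysis is complete, the global statement follows from the tensor product structure: because the basepoint filtration is compatible with tensor products across crossings, the global complex $C_F(Z)$ decomposes as the tensor product of these local pieces. Matching the singular, bivalent-vertex, and crossing contributions to $C_H(D_{f_Z,2})$ at each crossing of $D$, together with the accumulated reducing factors $R \xrightarrow{U_{i(c)}} R$ for right turns at positive crossings and $R \xrightarrow{U_{l(c)}} R$ for left turns at negative crossings, gives the claimed formula. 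If instead $f_Z$ is non-admissible, at least one crossing contributes an acyclic local factor, and by the K\"unneth formula the entire complex $C_F(Z)$ is acyclic.
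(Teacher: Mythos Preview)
Your overall strategy matches the paper's: a crossing-by-crossing case analysis by local cycle type, identifying each local contribution with the corresponding piece of $C_H(D_{f_Z,2})$, and recognizing that the non-admissible turns ($Z_1$ at a negative crossing, $Z_2$ at a positive crossing) yield an isomorphism as the edge map and hence an acyclic mapping cone. The paper's proof proceeds exactly this way, with the detailed disc counts for the negative crossing deferred to the Appendix.

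However, you have swapped the resolution assignments for $Z_3,Z_4$ and $Z_5$. The diagonal cycles $Z_3$ and $Z_4$ appear only at the \emph{singularization}: the Heegaard diagram for the smoothing can be isotoped (away from the extra basepoints) to the standard planar diagram for the oriented resolution, which has no generators with diagonal connectivity, so the smoothing contribution for $Z_3,Z_4$ is acyclic and only the singularized vertex survives. Conversely, the full cycle $Z_5$ appears only at the \emph{smoothing}: none of its generators use the coordinate $e_1$, so the subcomplex $X$ is trivial and only $Y$ contributes. Your description of the $Z_5$ contribution as ``the HOMFLY-PT complex at a singular vertex'' is accordingly off; since all four edges at $c$ lie in $Z$, the crossing $c$ does not appear in $D_{f_Z,2}$ at all, and after canceling the four bigons on $U_1,U_2,U_3,U_4$ the local contribution is simply $R$. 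With these assignments corrected, your outline is the paper's proof.
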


\begin{proof}

We have computed the complex at each vertex in the cube of resolutions - now we just need to count the discs passing through the $B$ basepoints to compute the edge maps. It is not hard to see that if we don't allow discs to pass though the $p_{i}$, then no discs will pass through multiple $B$ basepoints. Thus, there are only edge maps and no higher differentials.

We will now go through each of the possible local cycles at the crossing $c$. The full computation for a negative crossing is included in the appendix, but we will give a summary here.

For the empty cycle $Z_{0}$ (both positive and negative crossing), we get precisely the HOMFLY-PT complex corresponding to a crossing, with the maps preserving the cube filtration giving $d_{+}$ and the edge maps giving $d_{v}$. There are no higher face maps that preserve the basepoint filtration.

For the right turn $Z_{1}$, we see that deleting $Z_{1}$ from the singularization of $c$ leaves the same diagram as deleting $Z_{1}$ from the oriented smoothing of $c$. The corresponding complexes are thus isomorphic. If $c$ is a positive crossing, then the map from the singularization to the smoothing is given by multiplication by $U_{i(c)}$. However, if it is a negative crossing, the edge map from the smoothing to the singularization is the canonical isomorphism.

The left turn $Z_{2}$ is the opposite of $Z_{1}$. We still get isomorphic complexes at the singularization and smoothing of $c$, but the maps between them are switched: for a positive crossing, the edge map is isomorphism, while for a negative crossing, it is multiplication by $U_{l(c)}$.

As mentioned above, the cycles $Z_{3}$, $Z_{4}$ and $Z_{5}$ only appear one of the two resolutions - $Z_{3}$ and $Z_{4}$ in the singularization, and $Z_{5}$ in the smoothing. For $Z_{3}$ and $Z_{4}$, we get a trivial or acyclic complex at the smoothed vertex, and at the singularized vertex we get the HOMFLY-PT complex of the singularization with the cycle removed. For $Z_{5}$, we get a trivial complex at the singularized vertex and the HOMFLY-PT complex of the complete resolution with the edges of the cycle $Z_{5}$ removed at the smoothing.

\end{proof}

\begin{rem}

What we actually get after making the above cancellations is a lift of HOMFLY-PT homology from the ring $Z[U_{1},...,U_{n}]/I$ to $Z[U_{1},...,U_{n}]$ tensored with a Koszul complex on $\{U_{i(c)}+U_{j(c)} + U_{k(c)} + U_{l(c)} \}$. This is the version of HOMFLY-PT homology first introduced in \cite{KR2}. The marked edge makes it so that the linear relations form a regular sequence, which is why we get the middle HOMFLY-PT homology instead of the unreduced version.

\end{rem}

\begin{lem} \label{reducedLemma}

If $f^{-1}(1) \ne \emptyset$, then for each component of $D_{f,2}$ there is an edge $e_{i}$ in that component such that 

\[R \xrightarrow{U_{i}} R \]

\noindent
is one of the terms in one of the two Koszul complexes in (\ref{cyclecomp}).

\end{lem}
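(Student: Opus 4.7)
My plan is to argue, for each component $C$ of $D_{f,2}$, that $C$ must pass through at least one crossing in $c_R^+\cup c_L^-$, and that at such a crossing the indexing edge of the associated Koszul factor $R\xrightarrow{U_{i(c)}} R$ or $R\xrightarrow{U_{l(c)}} R$ sits in $D_{f,2}$ (and hence in $C$). The conclusion then follows immediately.

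First I would carry out a short local check, reading off the edge-map computation in the proof of Theorem~\ref{cyclecomp}. At a positive right turn $c \in c_R^+$, the two edges of $c$ used by $Z = f^{-1}(1)$ form the right-side pair, so the two remaining edges at $c$ lie in $D_{f,2}$ and form the complementary left-side turn; they are joined by this turn-smoothing and hence belong to the same component of $D_{f,2}$. A direct inspection should show that $i(c)$ indexes one of these two $D_{f,2}$-edges, and symmetrically that $l(c)$ indexes one of the two $D_{f,2}$-edges at a negative left turn. Consequently, any component of $D_{f,2}$ that visits a crossing in $c_R^+\cup c_L^-$ contains a Koszul-reduced edge.

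Next I would argue by contradiction that every component must visit such a crossing. Suppose $C$ visits no crossing in $c_R^+\cup c_L^-$. Running through the local cases: at a $Z_0$ crossing $C$ passes through on one or both strands; at a $Z_3$ or $Z_4$ crossing $D_{f,2}$ has only a pass-through pair, so $C$ either takes it straight through or avoids the crossing; at a $Z_5$ crossing $C$ cannot appear; and by admissibility the only turn-type crossings of $Z$ lie in $c_R^+\cup c_L^-$, which $C$ avoids. Thus $C$ never turns; as a closed subcurve of $D$ it follows a braid strand through every crossing it visits. Because $D$ is a closed braid diagram for the knot $K$, its braid permutation is a single cycle, so tracing any strand through the closure arcs eventually sweeps out every strand and hence all of $K$. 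This forces $C = K$, $D_{f,2} = K$, and $f^{-1}(1) = \emptyset$, contradicting the hypothesis.

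I expect the main obstacle to be the first, local step — checking that the Koszul indices $i(c)$ and $l(c)$ genuinely label edges of $D_{f,2}$ rather than edges of $Z$. With the labeling conventions fixed, this should be forced by the $Z_1$ and $Z_2$ case analysis in the proof of Theorem~\ref{cyclecomp}, but it is the step most sensitive to the orientation and edge-labeling choices and must be verified carefully.
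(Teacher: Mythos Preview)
Your proposal is correct and follows essentially the same contradiction argument as the paper: a component of $D_{f,2}$ that never turns must trace out the entire knot $K$, forcing $f^{-1}(1)=\emptyset$. You are in fact more careful than the paper on two points it leaves implicit --- that admissibility forces every turn of $Z$ to lie in $c_R^+\cup c_L^-$ (so that every turn actually contributes a Koszul factor), and that the indexing edges $e_{i(c)}$ and $e_{l(c)}$ at such crossings belong to $D_{f,2}$ --- and both checks go through exactly as you describe.
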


\begin{proof}

If a component $C$ of $D_{f,2}$ made no turns, (i.e. never had local cycle $Z_{1}$ or $Z_{2}$ at a crossing) then by starting at the marked edge and following $C$ in a manner consistent with its orientation, we will trace the whole knot $K$. But if $C=K$ then $f^{-1}(1) = \emptyset$.

\end{proof}

This lemma tells us that in the reduced complex, each component of the link $D_{f,2}$ gets reduced at least once, and in the minus complex the same is true except when $f^{-1}(1)=\emptyset$. In the HOMFLY-PT complex, multiplication by two edges on the same component of a link are homotopic. It follows that if we reduce the HOMFLY-PT homology once on each component, it doesn't matter which edge we pick on each component, the resulting homology will be the same. This invariant is known as the totally reduced HOMFLY-PT homology.

This chain homotopy between edges on the same component also tells us that multiplication by an edge on a component that has already been reduced is always trivial, so reducing $k$ times on the same component will give $2^{k-1}$ copies of the homology obtained by reducing once. If $D$ is the diagram of an $m$ component link, and $A$ is a set of $k$ edges of $D$ with at least one edge on each component, then the HOMFLY-PT homology of $D$ reduced at the edges in $A$ consists of $2^{k-m}$ copies of the totally reduced HOMFLY-PT homology of $D$. Since this result depends only on $k$ and not the particular edges in $S$, we will write this as 

\[H_{H}(D,k)\]

We can divide the differentials in Theorem \ref{cyclecomp} into those which preserve the cube filtration (vertex maps) and those which change it by one (edge maps). The differentials which preserve the cube filtration are given by $d_{+}$, and the edge maps are the $d_{v}$ differentials as well as the differentials in the two Koszul complexes (the reducing maps). Letting $d_{i}^{f}$ denote those differentials which preserve the basepoint filtration and increase the cube grading by $i$, the vertex maps are given by $d_{0}^{f}$ and the edge maps are given by $d_{1}^{f}$.

\begin{thm}
\label{homthm}

If $D$ is a decorated braid diagram for a knot $K$ and $C_{F}(D)$ is the complex coming from the oriented cube of resolutions, then 

\begin{equation}
\label{19}
H_{*}(H_{*}(C_{F}(D),d^{f}_{0}),(d^{f}_{1})^{*})=  \mathop{\bigoplus_{f \text{ admissible}}}_{f(e_{0})=2}  H_{1}(D_{f,1}) \otimes H_{H} (D_{f,2}, \mathcal{T}(f)) 
\end{equation}

\noindent
and there is a spectral sequence from this complex to $\mathit{CFK}^{-}(K)$. If $\overline{C}_{F}(D)$ is the reduced complex, then 

\begin{equation}
\label{20}
H_{*}(H_{*}(\overline{C}_{F}(D),d^{f}_{0}),(d^{f}_{1})^{*})=  \mathop{\bigoplus_{f \text{ admissible}}}_{f(e_{0})=2}  H_{1}(D_{f,1}) \otimes \overline{H}_{H} (D_{f,2},\mathcal{T}(f)) 
\end{equation}

\noindent
and there is a spectral sequence from this complex to $\widehat{\mathit{HFK}}(K)$.

\end{thm}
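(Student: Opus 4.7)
The plan is to combine the basepoint filtration decomposition of $C_F(D)$ with the structural identification in Theorem \ref{cyclecomp}. By the lemma in Section \ref{basepointsection}, the differential $d^f$ preserves the splitting of $C_F(D)$ into multi-cycle subcomplexes, so
\[
H_*(C_F(D), d_0^f) = \bigoplus_Z H_*(C_F(Z), d_0^f),
\]
and likewise after taking the further $(d_1^f)^*$-homology. By Theorem \ref{cyclecomp}, only the summands with $f_Z$ admissible and $e_0 \notin Z$ contribute, and each such $C_F(Z)$ is isomorphic to $(C_H(D_{f,2}), d_+ + d_v)$ tensored with the Koszul reducer complexes indexed by the turn crossings in $c_R^+$ and $c_L^-$.

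Under this identification, the paragraph immediately preceding Theorem \ref{homthm} asserts that $d_0^f$ corresponds to the cube-vertex map $d_+$, while $d_1^f$ corresponds to the cube-edge maps, namely $d_v$ together with the reducer differentials. Taking the inner $H_*(\cdot, d_0^f) = H_*(\cdot, d_+)$ on each surviving summand and then the induced $(d_1^f)^*$-homology produces $H_*(H_*(C_H(D_{f,2}), d_+), d_v^*)$ tensored with the homology of each reducer. The first factor is $H_H(D_{f,2})$ once the definitional grading shift $\{-w+b-1, w+b-1, w-b+1\}$ is applied, and each reducer homology implements a single reduction at the corresponding edge in the sense discussed after Lemma \ref{reducedLemma}.

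The count is then forced by admissibility: since left turns at positive crossings and right turns at negative crossings are forbidden, every turn of $f^{-1}(1)$ lies in $c_R^+ \cup c_L^-$, so there are exactly $\mathcal{T}(f)$ reducers. Lemma \ref{reducedLemma} guarantees that each component of $D_{f,2}$ is reduced at least once, so $H_H(D_{f,2}, \mathcal{T}(f))$ is well-defined and independent of the particular edges chosen, matching the right-hand side of (\ref{19}) summand by summand. The tensor factor $H_1(D_{f,1})$ is one-dimensional over $\Z_2$ for any link, so up to grading shift it only records the cycle data and aligns the formula with the composition product in (\ref{compred}). The reduced statement (\ref{20}) follows from the same argument after additionally tensoring with the reducer $R \xrightarrow{U_0} R$ at the marked edge, which adds one reduction on the component of $D_{f,2}$ carrying $e_0$, converting $H_H$ to $\overline{H}_H$.

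Finally, the spectral sequences to $\mathit{HFK}^-(K)$ and $\widehat{\mathit{HFK}}(K)$ are those induced by the basepoint filtration on $(C_F(D), d)$ and its reduced analogue. Their $E_1$ pages are $H_*(C_F(D), d^f)$ and $H_*(\overline{C}_F(D), d^f)$, and the identification above shows these coincide with the right-hand sides of (\ref{19}) and (\ref{20}) respectively. The main obstacle I anticipate is the case-by-case verification that the vertex/edge decomposition of Theorem \ref{cyclecomp} precisely matches the cube-filtration splitting $d^f = d_0^f + d_1^f$, which hinges on the disc counts at the $B$ basepoints referenced in the appendix; once those local checks are in hand, the rest of the proof is formal bookkeeping.
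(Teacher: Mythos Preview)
Your proposal is correct and follows the same route as the paper: the paper's proof consists of the single sentence ``This theorem follows from Theorem \ref{cyclecomp} together with the above discussion,'' and you have simply unpacked that discussion---the cycle-by-cycle decomposition from Section \ref{basepointsection}, the identification of $d_0^f$ with $d_+$ and $d_1^f$ with $d_v$ plus the reducer maps, and the appeal to Lemma \ref{reducedLemma} to count the $\mathcal{T}(f)$ reductions. One small imprecision: in your final paragraph you say the $E_1$ page of the basepoint filtration spectral sequence coincides with the right-hand side of (\ref{19}), but (\ref{19}) is really the $E_2$ page of the cube filtration on the basepoint-filtered complex (equivalently, the $E_2$ of the double filtration), since Theorem \ref{cyclecomp} shows $d^f = d_0^f + d_1^f$ with no higher terms; this does not affect the argument.
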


\begin{proof}
This theorem follows from Theorem \ref{cyclecomp} together with the above discussion. 
\end{proof}

The reason why we include the $H_{1}(D_{f,1})$ even though it is always one dimensional is not just because it draws analogies with the composition product - it also provides information regarding where these homologies lie in the cube of resolutions. Given a labeling $f$, the HOMFLY-PT homology $H_{H}(D_{f,2})$ is only going to appear when all of the crossings in $D_{f,1}$ are smoothed. This corresponds with the fact that $sl_{1}$ homology is trivial whenever there is a singular point in $D_{f,1}$, and the contribution comes from all crossings having the oriented smoothing.

The homology in (\ref{20}) is finite dimensional, and with the proper grading shift for each labeling $f$ the graded Euler characteristic is given by $P_{H}(aq,q,D)$. (This follows directly from the composition product formula.) We will discuss how the triple grading on this complex relates to the Maslov and Alexander gradings in the next section.

Going back to Theorem \ref{cyclecomp}, we can see what would happen if we forgot about the cube filtration. The homology corresponding to a labeling $f$ would then be $H_{1}(D_{f,1}) \otimes H_{-1}(D_{f,2},T(f))$, which we see by replacing the HOMFLY-PT homology with the $sl_{-1}$ homology. This gives a true categorification of the (1,-1) composition product, since the spectral sequence induced by the basepoint filtration converges to knot Floer homology, which categorifies the Alexander polynomial $P_{0}(q,D)$.

\begin{thm}
\label{knotfloer}

Let $C_{F}(D)$ be the complex given by the cube of resolutions for knot Floer homology. Then,

\begin{equation}
H_{*}(C_{F}(D),d^{f}_{0}+d^{f}_{1})= \mathop{\bigoplus_{f \text{ admissible}}}_{f(e_{0})=2} H_{1}(D_{f,1}) \otimes H_{-1} (D_{f,2},\mathcal{T}(f))
\end{equation}

\noindent
or, in the case of reduced knot Floer homology,

\begin{equation}
H_{*}(\overline{C}_{F}(D),d^{f}_{0}+d^{f}_{1})=  \mathop{\bigoplus_{f \text{ admissible}}}_{f(e_{0})=2} H_{1}(D_{f,1}) \otimes H_{-1} (D_{f,2},\mathcal{T}(f)+1)= \mathop{\bigoplus_{f \text{ admissible}}}_{f(e_{0})=2} V^{\otimes \mathcal{T}(f)} 
 \end{equation}

\noindent
where $V$ is a two-dimensional vector space over $Z_{2}$. There are differentials on these complexes giving $\mathit{HFK}^{-}(K)$ and $\widehat{\mathit{HFK}}(K)$, respectively.

\end{thm}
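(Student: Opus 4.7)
The plan is to apply Theorem~\ref{cyclecomp} to decompose $C_F(D)$ according to the basepoint filtration and then recognize each piece as the (reduced) HOMFLY-PT complex equipped with its unfiltered $sl_{-1}$ differential $d_+ + d_v$. All of the heavy lifting has been done in Section~4; the present theorem is essentially a reading-off of those computations when we take the total differential $d_0^f + d_1^f$ rather than iterated cube-filtered homology.

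By the observation in the proof of Theorem~\ref{cyclecomp} that no basepoint-filtration-preserving disc passes through more than one $B$ basepoint, the basepoint-filtration-preserving part of the knot Floer differential is exactly $d_0^f + d_1^f$. Hence $(C_F(D), d_0^f + d_1^f)$ is the $E_0$ page of the basepoint-filtration spectral sequence and splits as $\bigoplus_Z C_F(Z)$, with cycles containing the marked edge $e_0$ or corresponding to inadmissible labelings contributing acyclic summands by Theorem~\ref{cyclecomp}. On each surviving summand the same theorem gives
\[
C_F(Z) \;\cong\; (C_H(D_{f,2}), d_+ + d_v) \otimes \Bigl(\bigotimes_{c \in c_R^+} R \xrightarrow{U_{i(c)}} R\Bigr) \otimes \Bigl(\bigotimes_{c \in c_L^-} R \xrightarrow{U_{l(c)}} R\Bigr),
\]
under which $d_0^f$ matches $d_+$ and $d_1^f$ matches $d_v$ together with the Koszul reduction maps. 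Since admissibility forces $|c_R^+| + |c_L^-| = \mathcal{T}(f)$ and tensoring $C_H(L)$ with $R \xrightarrow{U_i} R$ is precisely the definition of reducing at $e_i$, the total homology of the right-hand side is $H_{-1}(D_{f,2}, \mathcal{T}(f))$ by the definition of $sl_{-1}$ homology as $H_*(C_H(D), d_+ + d_v)$. Appending the trivial one-dimensional factor $H_1(D_{f,1})$ and summing over admissible $f$ proves the first equation.

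The reduced version goes through identically, with the marked-edge reduction adding one more Koszul factor and bumping the reduction count to $\mathcal{T}(f) + 1$; Lemma~\ref{reducedLemma} then ensures every component of $D_{f,2}$ is reduced at least once (the marked-edge reduction handles the case $f^{-1}(1) = \emptyset$), so the simplification $H_{-1}(L, k) = V^{\otimes (k-1)}$ recorded in the introduction collapses each summand to $V^{\otimes \mathcal{T}(f)}$. The spectral sequences to $\mathit{HFK}^-(K)$ and $\widehat{\mathit{HFK}}(K)$ are those induced by the (bounded) basepoint filtration, whose $E_1$ page is precisely the homology just computed and which converges by standard bounded-filtration arguments. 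The one genuinely non-mechanical step is the matching of $d_0^f$ with $d_+$ and of $d_1^f$ with $d_v$ together with the Koszul differentials under the isomorphism of Theorem~\ref{cyclecomp}; this is a local crossing-by-crossing check already implicit in the proof of that theorem, where vertex maps and edge maps are distinguished precisely by their cube-grading behavior.
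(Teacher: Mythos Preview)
Your proposal is correct and follows essentially the same route as the paper: the theorem is deduced directly from Theorem~\ref{cyclecomp} by forgetting the cube filtration, so that the total differential on each $C_F(Z)$ becomes $d_+ + d_v$ and its homology is $H_{-1}$ rather than $H_H$; the reduced simplification then uses Lemma~\ref{reducedLemma} and the formula $H_{-1}(L,k) = V^{\otimes k-1}$, and the spectral sequences to $\mathit{HFK}^-$ and $\widehat{\mathit{HFK}}$ are those of the basepoint filtration. Your write-up is in fact more explicit than the paper's, which records the argument only in the discussion paragraph immediately preceding the theorem.
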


\subsection{Gradings} \label{gradings} In this section we will discuss the gradings on complexes in the previous two theorems.

\subsubsection{A Bigrading for Theorem \ref{knotfloer}}

We will first discuss the version where the cube filtration is ignored, as this will allow us to build up to the triply graded version. Knot Floer homology comes equipped with two gradings - the Maslov grading and the Alexander Grading. The Maslov grading is the homological grading, i.e. the differential decreases the Maslov grading by 1, while the Alexander grading is preserved. The variables $U_{i}$ all have Maslov grading $-2$ and Alexander grading $-1$. Let $\{i,j\}$ denote a shift by $i$ in Maslov grading and $j$ in Alexander grading. Note that these gradings align with the bigrading in 

Let $x$ and $y$ be generators in the knot Floer complex. If there is a differential $\delta x = P(U_{1},...,U_{n}) y$, then the above information tells us that $P$ is a homogeneous polynomial, and if the degree of $P$ is $n$, then 

\begin{equation}
M(x)-M(y)= 1 -2n \hspace{2mm} \text{  and  } \hspace{2mm}  A(x)-A(y)=-n
\end{equation}

\noindent
 where $M$ and $A$ are the Maslov and Alexander gradings, respectively.

The grading for each labeling will be easiest to describe in terms of the cycle $Z=f^{-1}(1)$. Let us begin with the empty cycle $Z_{\phi}$. The absolute Maslov and Alexander gradings are difficult to pin down, so we will define a grading that is correct up to an overall shift. It was shown in the proof of Theorem 3.3 that after canceling the linear relations $U_{i}+U_{j}+U_{k}+U_{l}$, we get the complex $(C_{H}(D), d_{+}+d_{v})$ from Section \ref{h-1}, ignoring gradings. However, as noted above, the relative gradings in the $E_{k}(-1)$ complex are the same as those in the knot Floer complex. Since we are only define a relatively graded theory, we can apply those gradings to the complex corresponding to the empty cycle.

Thus, to each positive crossing, we get the complex in Figure \ref{pos} and to each negative crossing the complex in Figure \ref{neg}.

\begin{figure}[!h]

\centering
\begin{tikzpicture}
  \matrix (m) [matrix of math nodes,row sep=5em,column sep=6em,minimum width=2em] {
     R\{0,0\} & R\{1,1\} \\
     R\{-1,-1\} & R\{2,1\} \\};
  \path[-stealth]
    (m-2-1) edge node [left] {$U_{j}+U_{k}$} (m-1-1)
    (m-1-1) edge node [above] {$U_{k}+U_{i}$} (m-1-2)
    (m-2-2) edge node [right] {$1$} (m-1-2)
    (m-2-1) edge node [above] {$U_{i}U_{j}+U_{k}U_{l}$} (m-2-2);
\end{tikzpicture}
 \caption{The Positive Crossing}
 \label{pos}
 \vspace{10mm}
\end{figure}

\begin{figure}[!h]
\centering
\begin{tikzpicture}
  \matrix (m) [matrix of math nodes,row sep=5em,column sep=6em,minimum width=2em] {
     R\{-3,-2\} & R\{0,0\} \\
     R\{-2,-2\} & R\{-1,-1\} \\};
  \path[-stealth]
    (m-2-1) edge node [left] {$1$} (m-1-1)
    (m-1-1) edge node [above] {$U_{i}U_{j}+U_{k}U_{l}$} (m-1-2)
    (m-2-2) edge node [right] {$U_{j}+U_{k}$} (m-1-2)
    (m-2-1) edge node [above] {$U_{k}+U_{i}$} (m-2-2);
\end{tikzpicture}
\caption{The Negative Crossing}
\label{neg}
\end{figure}

With these conventions, the generator of the homology $\Z_{2}[U]$ will be in grading $(0,0)$. We will now apply the Maslov index 1 discs used by Ozsv\'{a}th and Szab\'{o} to determine the gradings of the non-empty cycles relative to the empty cycle.

\textbf{Special Case:} Assume first that the cycle $Z$ is homeomorphic to $S^{1}$. Since it lies in $S^{2}$, it bounds two discs - let $\mathbb{D}$ denote the one such that $Z$ is oriented clockwise as the boundary of $\mathbb{D}$. Assume for now that the marked edge is not contained in $\mathbb{D}$. 

Define $\mathcal{A}$ to be the set of vertices in $Z$, with $\mathcal{T}$ the set of vertices at which $Z$ makes a turn (local cycles $Z_{1}$ or $Z_{2}$ from figure 4), and $\mathcal{D}$ the set of vertices at which it is a diagonal (locally $Z_{3}$ or $Z_{4}$). We will further divide the sets $\mathcal{T}$ and $\mathcal{D}$ into positive and negative crossings, which we will write as $\mathcal{T}_{+}, \mathcal{T}_{-}, \mathcal{D}_{+}$, and $\mathcal{D}_{-}$. In an abuse of notation, we will use the same symbols for the orders of these sets. 

For the crossings not contained in $Z$, the same cancelations will be taking place in $Z$ and $Z_{\phi}$ and the homology will end up in the same gradings. However, for the crossings in $\mathcal{A}$, the complex is given by a Koszul complex on the edges in $Z$, which form a regular sequence. Therefore, the homology will lie in the bottom of the Koszul complex (the lowest algebraic grading).

Our cycle will only appear in the cube of resolutions where the vertices in $\mathcal{D}$ are singularized, but will appear where the vertices in $\mathcal{T}$ are smoothed or singularized. For each such choice at the vertices in $T$, we will get a copy of the complex $H_{*}(C_{H}(D-Z), d_{+}+d_{v})$. Let us look at the case where all of the vertices in $\mathcal{A}$ are singularized. Then, at a positive crossing, $C_{F}(Z_{\phi})$ will have complex 

\begin{equation} \label{25}
R\{-1,-1\} \xrightarrow{U_{i}U_{j}+U_{k}U_{l}} R\{2,1\}
\end{equation}

\noindent
and at a negative crossing 

\begin{equation} \label{26}
R\{-3,-2\} \xrightarrow{U_{i}U_{j}+U_{k}U_{l}} R\{0,0\} 
\end{equation}

In \cite{Szabo}, Ozsv\'{a}th and Szab\'{o} identify two discs from the bottom generator of the Koszul complex on the edges in $Z$ to the bottom generator of the corresponding Koszul complex on $C_{F}(Z_{\phi})$. From (\ref{25}) and (\ref{26}), we know that the latter has grading $\{2D_{+} + 2T_{+}, D_{+} + T_{+} \}$

The order of these two terms in the $\{U_{i}\}$ is given by $\mathcal{T}_{+} + \frac{\mathcal{D}}{2} $. (Note that $\mathcal{D}$ is always even, so $ \frac{\mathcal{D}}{2} $ is a non-negative integer.) Applying (equation), we get that the lowest grading in the Koszul complex in $C_{F}(Z)$ is given by 

\begin{equation}
 \{2\mathcal{D}_{+} + 2\mathcal{T}_{+} - \mathcal{D} - 2\mathcal{T}_{+} +1, \mathcal{D}_{+} + \mathcal{T}_{+} - \mathcal{T}_{+} -  \frac{\mathcal{D}}{2} \}
\end{equation}

\begin{equation}
 = \{\mathcal{D}_{+}-\mathcal{D}_{-}, \frac{\mathcal{D}_{+}-\mathcal{D}_{-}}{2} \}
\end{equation}

We have made the choice to singularize all of the crossings in $\mathcal{T}$. However, as noted above, we will also have a copy of $(C_{H}(D-Z), d_{+}+d_{v})$ if we smooth any number of them, giving us $2^{\mathcal{T}}$ copies. They are arranged in a Koszul complex, with each edge map given by multiplication by one of the $U_{i}$, so they differ in grading by $\{1,1\}$. For the negative crossings $\mathcal{T}_{-}$, the maps go from the smoothing to the singularization, while for the positive crossings $\mathcal{T}_{+}$ they go from the singularization to the smoothing. Therefore, the total complex corresponding to the cycle $Z$ is given by 

\begin{equation}
(C_{H}(D-Z), d_{+}+d_{v}) \otimes V_{+}^{\otimes \mathcal{T}_{+}} \otimes V_{-}^{\otimes \mathcal{T}_{-}} \{\mathcal{D}_{+}-\mathcal{D}_{-} +1 , \frac{\mathcal{D}_{+}-\mathcal{D}_{-}}{2} \}
\end{equation}

\begin{equation}
= (C(D-Z), d_{h}+d_{v}) \otimes V^{\otimes \mathcal{T}} \{\mathcal{D}_{+}-\mathcal{D}_{-} +\frac{\mathcal{T}_{+}-\mathcal{T}_{-}}{2} +1, \frac{\mathcal{D}_{+}-\mathcal{D}_{-}+\mathcal{T}_{+}-\mathcal{T}_{-}}{2} \}
\end{equation}

\noindent
where $V_{+}=\Z_{2}\{0,0\} \oplus \Z_{2}\{1,1\}$, $V_{-}=\Z_{2}\{0,0\} \oplus \Z_{2}\{-1,-1\}$, and $V=\Z_{2}\{-\frac{1}{2},-\frac{1}{2}\} \oplus \Z_{2}\{\frac{1}{2},\frac{1}{2}\}$.
From Section 2.2, we know that the homology of $ (C_{H}(D-Z), d_{+}+d_{v})$ is given by 

\begin{equation}
\Z_{2} [U_{1},...,U_{l}] \bigotimes V_{-}^{l-1} 
\end{equation}

\noindent
where $l$ is the number of components of the link $D-Z$, and each $U_{i}$ lies on a different component. Since we are dealing with a knot, each component of $D-Z$ will be reduced by one of the edge maps coming from a turn in $Z$ (Theorem 3.3). Thus, as our final homology for the cycle $Z$ we get

\begin{equation}
V^{\otimes \mathcal{T} - 1}  \{\mathcal{D}_{+}-\mathcal{D}_{-} +\frac{\mathcal{T}_{+}-\mathcal{T}_{-}}{2}+1, \frac{\mathcal{D}_{+}-\mathcal{D}_{-}+\mathcal{T}_{+}-\mathcal{T}_{-}}{2} \}
\end{equation}

\noindent
or, in the reduced case,
\begin{equation}
V^{\otimes T }  \{D_{+}-D_{-} +\frac{T_{+}-T_{-}}{2}+1, \frac{D_{+}-D_{-}+T_{+}-T_{-}}{2} \}
\end{equation}

In this computation, we have made the assumption that the marked edge is not contained in the disc $\mathbb{D}$. If this is not the case, then we still have a Maslov index 1 discs as described above, with the only difference that they now passes through one $X$ basepoint and one $O$ basepoint. This shifts the grading of elements in our cycle by $\{-2,0\}$, so we get the new grading shift 

\begin{equation}
V^{\otimes \mathcal{T} }  \{\mathcal{D}_{+}-\mathcal{D}_{-} +\frac{\mathcal{T}_{+}-\mathcal{T}_{-}}{2}-1, \frac{\mathcal{D}_{+}-\mathcal{D}_{-}+\mathcal{T}_{+}-\mathcal{T}_{-}}{2} \}
\end{equation}

\textbf{General Formula:} Let $Z$ be an arbitrary multicycle in the diagram $D$. In the special case calculated above, we were able to assume all of the crossings contained in $Z$ were singularized. That is no longer the case for a general multicycle - those vertices at which $Z$ has the local cycle $Z_{5}$ must be smoothed for generators corresponding to $Z$ to appear. Let $\mathcal{X}$ denote the set of crossings at which $Z$ has this multicycle, which we will as usual divide into positive crossings $\mathcal{X}_{+}$ and negative crossings $\mathcal{X}_{-}$. 

After smoothing the crossings in $\mathcal{X}$ and singularizing those in $\mathcal{T}$ and $\mathcal{D}$, we can view our multi-cycle as a union of $n$ copies of $S^{1}$. We will apply the same technique used in the one-component case for each of these cycles. Due to the smoothings, the grading of the generator that we will be mapping to is given by 

\begin{equation}
\{ 2\mathcal{D}_{+}+2\mathcal{T}_{+}+\mathcal{X}_{+}-\mathcal{X}_{-}, \mathcal{D}_{+}+\mathcal{T}_{+}+\mathcal{X}_{+}-\mathcal{X}_{-} \}
\end{equation}

The $n$ (clock-wise oriented) discs that map to the empty cycle will have a total polynomial order of $\mathcal{T}_{+} +\frac{\mathcal{D}}{2}$, but now have Maslov index $n$. Let $k_{+}$ denote the number of discs which do not contain the marked edge, and $k_{-}$ the number of discs which do contain the marked edge, so $k_{+}+k_{-}=n$. Then the grading of the generator that the discs are mapping from is given by 

\begin{equation}
\{ \mathcal{D}_{+}-\mathcal{D}_{-}+\mathcal{X}_{+}-\mathcal{X}_{-}+k_{+}-k_{-}, \frac{\mathcal{D}_{+}-\mathcal{D}_{-}}{2}+\mathcal{X}_{+}-\mathcal{X}_{-} \}
\end{equation}

\noindent
Applying the same arguments as in the special case, we get the homology of a multicycle to be 

\begin{equation}
\label{bggrd}
V^{\otimes \mathcal{T}}  \{\mathcal{D}_{+}-\mathcal{D}_{-} +\mathcal{X}_{+}-\mathcal{X}_{-} +k_{+}-k_{-}+\frac{\mathcal{T}_{+}-\mathcal{T}_{-}}{2}, \frac{\mathcal{D}_{+}-\mathcal{D}_{-}+\mathcal{T}_{+}-\mathcal{T}_{-}}{2} +\mathcal{X}_{+}-\mathcal{X}_{-} \}
\end{equation}

Although we have so far assumed we are working with a knot, the arguments extend quite easily to links - the only difference is that when we want a reduced version, we will have to reduce the additional components.

To make these gradings absolute, we need to add an overall shift of $\{-w(D)-r(D),-\frac{1}{2}[w(D)-r(D)]\}$. The absolute Maslov grading is determined by the generator of the homology of $CF^{-}(S^{3})$ obtained by setting the $X$'s in the Heegaard diagram equal to $1$. We see that in the cube of resolutions, this generator corresponds to resolution with all crossings smoothed, and the multi-cycle consisting of all the circles not containing the marked edge. By the above computations, the Maslov grading shift of this cycle at this vertex in the cube is $w(D)-r(D)$, which is why we need an overall shift of $-w(D)+r(D)$. The Alexander grading shift follows from the fact that the Euler characteristic needs to be the Alexander polynomial, which determines it uniquely. We will soon show that this grading shift does indeed give the Alexander polynomial.

If $f$ is the labeling of $D$ such that $f^{-1}(1)=Z$, then with our new grading shifts (\ref{bggrd}) becomes 

\begin{equation}
V^{\otimes \mathcal{T} }  \{-w(D_{f,2})+r(D_{f,2})-\frac{\mathcal{T}_{+}-\mathcal{T}_{-}}{2}, \frac{1}{2}(w(D_{f,1})-w(D_{f,2}) + r(D)\}
\end{equation}

We can see that this formula is beginning to resemble the composition product. Knot Floer homology is related to the Alexander polynomial $P_{0}(q,K)$ via the formula

\begin{equation}
\label{poincare}
P_{0}(q,K) = \sum_{i,j} (-1)^{i}q^{2j} dim(\widehat{\mathit{HFK}}_{i,j}(K)) 
\end{equation}

\noindent
where $\widehat{\mathit{HFK}}_{i,j}(K)$ denotes the knot Floer homology in Maslov grading $i$ and Alexander grading $j$.

If we replace $\widehat{\mathit{HFK}}$ in (\ref{poincare}) with our complex, we get the sum

\begin{equation}
\mathop{\sum_{f \text{ admissible}}}_{f(e_{0})=2} (-1)^{-w(D_{f,2})+r(D_{f,2})+\mathcal{T}_{-}(f)} (q-q^{-1})^{\mathcal{T}(f)}q^{r(D)+w(D_{f,1})-w(D_{f,2})} 
\end{equation}

\noindent
We can show that this is equivalent to the (1,-1) composition product formula via two combinatorial identities. The first is that $w(D_{f,1})-w(D_{f,2})=s(D_{f,1})-s(D_{f,2})$, since the number of crossings that have two of the four edges labeled 1 must be the same as the number of crossings with two of the four edges labeled 2. The second is that given a decorated diagram $D$ of an n-component link $L$, $w(D)+r(D) \equiv n+1 \text{ }(\text{ }mod\text{ } 2\text{ })$. This follows from the base case of the n-component unlink, together with invariance under Reidemeister moves and changing the sign of a crossing.

Plugging in these identities, we get 

\begin{equation}
\mathop{\sum_{f \text{ admissible}}}_{f(e_{0})=2} (-1)^{\mathcal{T}_{-}(f)} (q-q^{-1})^{\mathcal{T}(f)}q^{r(D)+s(D_{f,1})-s(D_{f,2})} (-1)^{\#(D_{f,2})+1} 
\end{equation}

\noindent
where $\#(D)$ is the number of components in the underlying link. But $P_{-1}(D)=(-1)^{\#(D_{f,2})+1}$ and $P_{1}$ is identically $1$, so this equation becomes

\begin{equation}
\mathop{\sum_{f \text{ admissible}}}_{f(e_{0})=2} (-1)^{\mathcal{T}_{-}(f)} (q-q^{-1})^{\mathcal{T}(f)}q^{r(D)+s(D_{f,1})-s(D_{f,2})} P_{1}(q,D_{f,1})P_{-1}(q,D_{f,2})
\end{equation} 

\noindent
which is precisely our formula for the composition product in Section \ref{dest}. It is therefore equal to the Alexander polynomial $P_{0}(q,D)$. Thus, with this choice of Heegaard diagram, knot Floer homology can be viewed as a categorification of the $(1,-1)$ composition product. This also proves that our choice of absolute Alexander grading is correct.

\subsubsection{A Triple Grading for Theorem \ref{homthm}}

The complex $C_{F}(D)$ comes equipped with three gradings: the Maslov grading, the Alexander grading, and the grading induced by the cube of resolutions. The differentials $d^{f}_{0}$ and $d^{f}_{1}$ are clearly homogeneous with respect to these three gradings. To be consistent with the previous sections, we will double the cube grading so that it takes only one value mod 2. (It can either be even or odd, depending on the overall grading shift discussed below.) Call these three gradings $gr_{M}$, $gr_{A}$, and $gr_{v}$ respectively.

We will be relating these gradings to the triple grading $(gr_{q},gr_{h},gr_{v})$ on HOMFLY-PT homology used in \cite{Rasmussen}. This grading has the minor drawback that its graded Euler characteristic differs from the one we have been using via mirroring:

\begin{equation}
 \sum_{i,j,k} (-1)^{(k-j)/2}q^{i}a^{j} dim(\overline{H}_{H}^{i,j,k}(K)) = P_{H}(a,q,m(D))
\end{equation}

\noindent
where  $\overline{H}_{H}^{i,j,k}$ denotes the reduced HOMFLY-PT homology in $gr_{q}=i$,  $gr_{h}=j$, and $gr_{v}=k$, and $m(D)$ is the mirror of the diagram $D$. Since this is the grading convention most widely used when discussing spectral sequences on HOMFLY-PT homology, it is worthwhile to frame our triple grading in this perspective, despite the difference in chirality from our conventions.

We discussed the Maslov and Alexander gradings on HOMFLY-PT complexes in the previous section, and our cube grading $gr_{v}$ is defined the same way as the HOMFLY-PT vertical grading, which is why we gave them the same name. The $q$-grading and the horizontal grading are related to these three gradings in the following way.

\begin{equation}
\label{transform}
 gr_{q}=2gr_{A}-2gr_{M}-gr_{v} 
\end{equation}

\begin{equation}
\label{transform2}
gr_{h}=4gr_{A}-2gr_{M}-gr_{v} 
\end{equation}

\noindent
The grading shifts corresponding to each labeling $f$ were computed for the Alexander and Maslov gradings in the previous section, but we still need to compute the difference between the vertical grading of $H_{H}(D_{f,2})$ and the overall vertical grading.

The vertical grading on HOMFLY-PT homology of a diagram $D$ is centered on the smoothings, with an overall grading shift of $w(D)-b(D)+1$, where $b(D)$ is the number of strands in $D$. We will assume that the marked edge $e_{0}$ is on the leftmost strand, so that $-b(D)+1=r(D)$. Thus, the vertical grading of the vertex in the cube in which all crossings are smoothed is $w(D)+r(D)$.

We will take our vertical grading to be centered on the smoothings as well, with the same overall shift of $w(D)+r(D)$. Let $f$ be a labeling of $D$. The difference between the vertical grading on the HOMFLY-PT complex corresponding to $D_{f,2}$ and the overall vertical grading is  then given by $w(D)+r(D)-w(D_{f,2})-r(D_{f,2}) - 2\mathcal{D}_{+}(f)+2\mathcal{D}_{+}(f)-\mathcal{T}_{+}(f)+\mathcal{T}_{-}(f)$, which can be simplified to 

\[ gr_{v} \text{ shift }= w(D)+ r(D_{f,1})-s(D_{f,2})-\mathcal{D}_{+}(f)+\mathcal{D}_{-}(f) \] 

Since each $\mathcal{T}_{+}$ corresponds to a reducing complex $R\{-2\} \xrightarrow{U_{i}} R\{0\} $ and each $\mathcal{T}_{-}$ corresponds to  $R\{0\} \xrightarrow{U_{i}} R\{2\} $ where the given gradings are the vertical gradings, we have included a shift of $-\mathcal{T}_{+}(f)+\mathcal{T}_{-}(f)$ in the vertical grading to make it so that the reducing complex corresponding to both $\mathcal{T}_{+}$ and $\mathcal{T}_{-}$ is given by 

\[
R\{-1\} \xrightarrow{U_{i}} R\{1\} 
\]

\noindent
From the previous section, we have that 

\[ gr_{M} \text{ shift } = -w(D_{f,2})+r(D_{f,2}) - \frac{\mathcal{T}_{+}-\mathcal{T}_{-}}{2} \]

\noindent
and 

\[ gr_{A} \text{ shift } = \frac{1}{2} (w(D_{f,1})-w(D_{f,2})+r(D)) \]

\noindent
Defining $gr_{q}$ and $gr_{h}$ as in (\ref{transform}) and (\ref{transform2}), we get the corresponding grading shifts to be 

\[ gr_{q} \text{ shift } = -r(D_{f,2})+s(D_{f,2}) \]

\noindent
and 

\[ gr_{h} \text{ shift }=r(D_{f,1}) +s(D_{f,1}) \]

\noindent
We can now reformulate Theorem \ref{homthm} in terms of our triple grading $(gr_{q},gr_{h},gr_{v})$.

\begin{thm}

\label{gradedhomflythm}

Let $H_{H}(D)$ denote the triply graded HOMFLY-PT homology, with the grading conventions given in \cite{Rasmussen}, and define the homology of $H_{1}$ to be in grading $\{0,0,0\}$. Then
\[
H_{*}(H_{*}(\overline{C}_{F}(D),d^{f}_{0}), (d^{f}_{1})^{*})=  \mathop{\bigoplus_{f \text{ admissible}}}_{f(e_{0})=2}  H_{1}(D_{f,1}) \otimes \overline{H}_{H} (D_{f,2},\mathcal{T}(f)) \{q(f), h(f), v(f) \}
\]

\noindent
where $q(f)=-r(D_{f,2})+s(D_{f,2})$, $h(f)=r(D_{f,1}) +s(D_{f,1})$, and $v(f)=w(D)+ r(D_{f,1})-s(D_{f,2})-\mathcal{D}_{+}(f)+\mathcal{D}_{-}(f)$.

\end{thm}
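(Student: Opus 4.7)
The plan is to view Theorem \ref{gradedhomflythm} as a graded refinement of Theorem \ref{homthm}: the underlying isomorphism of vector spaces is already established, so the task reduces to computing, for each admissible labeling $f$, the three grading shifts $q(f)$, $h(f)$, $v(f)$ between the summand $H_{1}(D_{f,1}) \otimes \overline{H}_{H}(D_{f,2},\mathcal{T}(f))$ (with its intrinsic HOMFLY-PT triple grading) and its image inside $C_{F}(D)$ under the cycle-wise decomposition from Theorem \ref{cyclecomp}. My strategy is to compute the Maslov and Alexander shifts using the results of the previous subsection, compute the vertical shift directly from the cube-of-resolutions structure, and then apply the linear change of variables (\ref{transform}) and (\ref{transform2}) to read off $q(f)$ and $h(f)$.

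First I would record the $gr_{M}$ and $gr_{A}$ shifts already derived, namely
\[
gr_{M}\text{ shift }=-w(D_{f,2})+r(D_{f,2})-\tfrac{\mathcal{T}_{+}-\mathcal{T}_{-}}{2}, \qquad gr_{A}\text{ shift }=\tfrac{1}{2}\bigl(w(D_{f,1})-w(D_{f,2})+r(D)\bigr),
\]
which came from the Ozsv\'{a}th--Szab\'{o} Maslov-index-one disc counts in the Heegaard diagram together with the base-case grading on the empty cycle. Next I would compute the $gr_{v}$ shift. The cube-grading on $C_{F}(D)$ is centered on the all-smoothing vertex with overall shift $w(D)+r(D)$, mirroring the HOMFLY-PT convention $w+b-1$ for the writhe/braid-width normalization with the marked edge on the leftmost strand so that $-b(D)+1=r(D)$. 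For a labeling $f$, the summand $\overline{H}_{H}(D_{f,2})$ sits at the cube-vertex where all crossings labeled by $Z_{0}$ are in their HOMFLY-PT position; the crossings contributing $\mathcal{D}_{\pm}$ and $\mathcal{T}_{\pm}$ each produce a vertical offset that must be tracked, and the reducing complexes for $\mathcal{T}_{\pm}$ require a symmetric shift so that both positive and negative turns give the complex $R\{-1\}\xrightarrow{U_{i}}R\{1\}$. Adding these contributions gives
\[
gr_{v}\text{ shift }=w(D)+r(D_{f,1})-s(D_{f,2})-\mathcal{D}_{+}(f)+\mathcal{D}_{-}(f),
\]
which is exactly $v(f)$.

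Having all three shifts, I would substitute into (\ref{transform}) and (\ref{transform2}). For $q(f)=2gr_{A}-2gr_{M}-gr_{v}$, expanding yields
\[
\bigl(w(D_{f,1})-w(D_{f,2})+r(D)\bigr)+\bigl(2w(D_{f,2})-2r(D_{f,2})+\mathcal{T}_{+}-\mathcal{T}_{-}\bigr)-\bigl(w(D)+r(D_{f,1})-s(D_{f,2})-\mathcal{D}_{+}+\mathcal{D}_{-}\bigr).
\]
Using the bookkeeping identities $w(D_{f,1})+w(D_{f,2})+\mathcal{T}_{+}-\mathcal{T}_{-}+\mathcal{D}_{+}-\mathcal{D}_{-}=w(D)$ and $r(D)=r(D_{f,1})+r(D_{f,2})$ (which follow from the fact that the crossings in $\mathcal{D}\cup\mathcal{T}$ are exactly those where $f^{-1}(1)$ and $f^{-1}(2)$ share a vertex, and from the additivity of Seifert circles under the $Z$-decomposition) the expression collapses to $-r(D_{f,2})+s(D_{f,2})$. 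An entirely analogous collapse, using $s(D_{f,1})=w(D)-w(D_{f,2})$, yields $h(f)=r(D_{f,1})+s(D_{f,1})$ from $gr_{h}=4gr_{A}-2gr_{M}-gr_{v}$.

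The main obstacle will be the combinatorial bookkeeping in the final collapse: carefully separating writhe contributions into the sets $\mathcal{X},\mathcal{D},\mathcal{T}$, matching the rotation numbers $r(D_{f,i})$ against $r(D)$ in the presence of diagonal and singularized crossings, and making sure the symmetric $\pm1$ vertical shift applied to the $\mathcal{T}_{-}$ reducers is consistent with the conventions of \cite{Rasmussen}. Once those identities are pinned down, the transformation formulas (\ref{transform}), (\ref{transform2}) mechanically produce $q(f)$ and $h(f)$, and combined with the already-computed $v(f)$ and the ungraded decomposition of Theorem \ref{homthm} the proof is complete.
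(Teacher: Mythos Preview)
Your proposal is correct and follows essentially the same route as the paper: the paper likewise quotes the $gr_{M}$ and $gr_{A}$ shifts from the bigraded computation, determines the $gr_{v}$ shift by comparing the cube normalization $w(D)+r(D)$ with the HOMFLY-PT normalization for $D_{f,2}$ (including the symmetric convention for the $\mathcal{T}_{\pm}$ reducing complexes), and then applies the linear change of variables (\ref{transform})--(\ref{transform2}) to obtain $q(f)$ and $h(f)$. The bookkeeping identities you single out (additivity of rotation number and the decomposition of $s(D_{f,2})$ into $w(D_{f,2})$ plus signed counts of $\mathcal{D}$ and $\mathcal{T}$ crossings) are exactly what the paper uses implicitly in its ``which can be simplified to'' step.
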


The $\mathcal{T}(f)$ edges at which the HOMFLY-PT homology is reduced correspond to triply graded complexes

\[
R\{1,0,-1\} \xrightarrow{U_{i}} R\{-1,0,1\} 
\]

Let this triply graded complex be denoted $E_{2}(\overline{C}^{f}_{F}(D))$, since it is the $E_{2}$ page of the spectral sequence induced by the cube filtration on the basepoint filtered complex. Applying the composition product formula, we will show the following:

\begin{thm}
\label{poincarehomfly}
Let $E^{i,j,k}_{2}(\overline{C}^{f}_{F}(D))$ denote the homology lying in triple grading $(i,j,k)$ with respect to the triple grading $(gr_{q}, gr_{h}, gr_{v})$. Then

\[
 \sum_{i,j,k} (-1)^{(k-j)/2}q^{i}a^{j} dim(\overline{H}^{i,j,k}(D)) = P_{H}(aq,q,m(D))
\]

\end{thm}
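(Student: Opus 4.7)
The plan is to compute the graded Euler characteristic of the $E_2$ page term-by-term via Theorem \ref{gradedhomflythm}, and then recognize the result as the composition product formula (\ref{p1}) applied to the mirror diagram $m(D)$. By Theorem \ref{gradedhomflythm}, the $E_2$ page decomposes as a direct sum, over admissible labelings $f$ with $f(e_0) = 2$, of summands $H_1(D_{f,1}) \otimes \overline{H}_H(D_{f,2}, \mathcal{T}(f))$, each shifted by $\{q(f), h(f), v(f)\}$. Because the graded Euler characteristic is additive over direct sums and multiplicative over tensor products, it suffices to evaluate each summand's Poincar\'e polynomial and then sum.

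For a fixed admissible $f$, the Euler characteristic of the summand factors into four pieces. The grading shift contributes $(-1)^{(v(f) - h(f))/2} q^{q(f)} a^{h(f)}$. The tensor factor $H_1(D_{f,1})$ has Euler characteristic $P_1(q, m(D_{f,1})) = P_H(q, q, m(D_{f,1}))$, since $H_1$ categorifies the $sl_1$ polynomial under Rasmussen's mirror convention. Each of the $\mathcal{T}(f)$ reducing complexes $R\{1,0,-1\} \xrightarrow{U_i} R\{-1,0,1\}$ contributes a factor of $(q - q^{-1})$ (with a sign coming from the odd parity of $k - j$). Finally, the totally reduced HOMFLY-PT homology of $D_{f,2}$ contributes $P_H(a, q, m(D_{f,2}))$.

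Substituting the explicit formulas for $q(f)$, $h(f)$, and $v(f)$ and combining these contributions, the total Euler characteristic takes the shape of the left-hand side of (\ref{p1}), but with $(a, q) \mapsto (a^{-1}, q^{-1})$ and the polynomials of $D_{f,i}$ replaced by those of $m(D_{f,i})$. Using the mirror identity $P_H(\alpha^{-1}, \beta^{-1}, L) = P_H(\alpha, \beta, m(L))$ on each subdiagram, this is precisely (\ref{p1}) applied to $m(D)$, whose right-hand side is $P_H(a^{-1}q^{-1}, q^{-1}, D) = P_H(aq, q, m(D))$.

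The main obstacle will be sign bookkeeping. Neither $(v(f) - h(f))/2$ nor $(k - j)/2$ on an individual reducing complex need be an integer; only the combined contribution of all reducing complexes together with the shift is guaranteed to produce integer $k - j$. One must verify that the accumulated signs assemble to $(-1)^{\mathcal{T}_+(f)}$, matching the sign in the mirror of (\ref{p1}). This requires a modular analysis of $v(f) - h(f) = w(D) - s(D_{f,1}) - s(D_{f,2}) - \mathcal{D}_+(f) + \mathcal{D}_-(f)$ together with the parity identity $w(D) + r(D) \equiv \#(D) + 1 \pmod{2}$ from Section \ref{gradings}.
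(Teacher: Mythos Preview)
Your approach is essentially the paper's: decompose via Theorem~\ref{gradedhomflythm}, compute the Euler characteristic summand by summand, and recognize the mirror composition product. The paper carries out the sign bookkeeping you flag as an obstacle, but the resolution is not the parity identity $w(D)+r(D)\equiv \#(D)+1 \pmod 2$ that you cite; that identity belongs to the bigraded computation in Section~\ref{gradings} and plays no role here. Instead, the paper first shifts the vertical grading on each reducing complex by $+1$ (so it becomes $R\{1,0,0\}\xrightarrow{U_i}R\{-1,0,2\}$ and contributes exactly $q-q^{-1}$), absorbing a total shift of $-\mathcal{T}(f)$ into $v(f)$. Then the key identity is
\[
s(D_{f,1})+s(D_{f,2})=w(D)+\mathcal{T}_+(f)-\mathcal{T}_-(f)+\mathcal{D}_+(f)-\mathcal{D}_-(f),
\]
since a crossing is counted once in $s(D_{f,1})+s(D_{f,2})$ unless the local cycle is $Z_1,Z_2,Z_3,$ or $Z_4$, in which case it is counted twice. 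Substituting this into your expression for $v(f)-h(f)$ (with the extra $-\mathcal{T}(f)$) gives $-2\mathcal{D}_+ +2\mathcal{D}_- -2\mathcal{T}_+$, and since $\mathcal{D}_+ + \mathcal{D}_-$ is always even the sign collapses to $(-1)^{\mathcal{T}_+(f)}$, which is exactly the sign in (\ref{p1}) for the mirror. Your mention of $H_1(D_{f,1})$ contributing $P_1$ is harmless since $P_1\equiv 1$, and the paper drops it silently.
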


\begin{proof}

We will start by modifying the reducing complexes in a minor way - we want their graded Euler characteristic to give $q-q^{-1}$ - this can be achieved by shifting the vertical grading by $\mathcal{T}(f)$, so that it is given by 

\[
R\{1,0,0\} \xrightarrow{U_{i}} R\{-1,0,2\} 
\]

\noindent
This means we have to subtract $\mathcal{T}(f)$ from the vertical grading shift, making it equal to 

\[w(D) + r(D_{f,1})-s(D_{f,2})-\mathcal{D}_{+}(f)+\mathcal{D}_{-}(f)-\mathcal{T}_{+}(f)-\mathcal{T}_{-}(f)\]

\noindent
The vertical shift minus the horizontal shift can be computed to be $-2\mathcal{D}_{+}(f)+2\mathcal{D}_{-}(f)-2\mathcal{T}_{+}(f)$. The graded Euler characteristic is then given by 

\[ 
\mathop{\sum_{f \text{ admissible}}}_{f(e_{0})=2} (-1)^{-\mathcal{D}_{+}(f)+\mathcal{D}_{-}(f)-\mathcal{T}_{+}(f)} (q-q^{-1})^{\mathcal{T}(f)}q^{-r(D_{f,2})+s(D_{f,2})}a^{r(D_{f,1}) +s(D_{f,1})} P_{H}(a,q,m(D))
\]

\noindent
Since $\mathcal{D}_{+}+\mathcal{D}_{-}$ is always even, we can simplify this expression to 

\[ 
\mathop{\sum_{f \text{ admissible}}}_{f(e_{0})=2} (-1)^{\mathcal{T}_{+}(f)} (q-q^{-1})^{\mathcal{T}(f)}q^{r(D_{f,2})+s(D_{f,2})}a^{-r(D_{f,1}) +s(D_{f,1})} P_{H}(a,q,m(D_{f,2}))
\]

\noindent
If we let $x=a^{-1}$ and $z=q^{-1}$, then since $P_{H}(a^{-1},q^{-1},m(D))=P_{H}(a,q,D)$, this formula becomes 

\[ 
\mathop{\sum_{f \text{ admissible}}}_{f(e_{0})=2} (-1)^{\mathcal{T}_{-}(f)} (z-z^{-1})^{\mathcal{T}(f)}z^{-r(D_{f,2})-s(D_{f,2})}x^{r(D_{f,1}) -s(D_{f,1})} P_{H}(x,z,D)
\]

\noindent
which, by the composition product, is equal to $P_{H}(xz,z,D)$. Substituting back for $a$ and $q$, we get that the graded Euler characteristic is

\[
 \sum_{i,j,k} (-1)^{(k-j)/2}q^{i}a^{j} dim(\overline{H}^{i,j,k}(D)) = P_{H}(a^{-1}q^{-1},q^{-1},D) = P_{H}(aq,q,m(D))
\]

\end{proof}

\appendix

\section{Computations For the Negative Crossing}

In this section, we describe the computations for Theorem \ref{cyclecomp} in the negative crossing case (the positive crossing is similar). The labelling for coordinates is given in Figure \ref{labeled diagram}. With this labeling of coordinates, the subcomplex $X$ is generated by elements containing $e_{1}$. To distinguish the three copies of $X$ in our complex, we will call the upper left copy $X^{\alpha}$, the upper right $X^{\beta}$, and the lower right $X^{\gamma}$. The $e_{1}$ coordinate of the generators will be written $e_{1}^{\alpha}$, $e_{1}^{\beta}$, and $e_{1}^{\gamma}$ respectively.

\begin{figure}[!h]
\centering
\begin{tikzpicture}
  \matrix (m) [matrix of math nodes,row sep=5em,column sep=6em,minimum width=2em] {
     X^{\alpha} & X^{\beta} \\
     Y & X^{\gamma} \\};
  \path[-stealth]
    (m-1-1) edge node [left] {\hspace{ 15mm} $\Phi_{A^{-}}$} (m-2-1)
            edge node [above] {1} (m-1-2)
            edge node [right]{$\Phi_{A^{-}B}$} (m-2-2)
    (m-2-1.east|-m-2-2) edge node [below] {$\Phi_{B}$} (m-2-2)
    (m-1-2) edge node [right] {$U_{1}+U_{2}+U_{3}+U_{4}$} (m-2-2);
\end{tikzpicture}
\caption{Complex for the Negative Crossing}
\label{negativecomplex}
\end{figure}

Since the filtered differentials are easy to count and do not depend on the complex structure, we will simply list them and go forward with computations rather than going into detail regarding the actual counting of the holomorphic discs.

\newpage

\subsection{$Z_{0}$ - The Empty Cycle}

For the empty cycle, $X$ is generated by $e_{1}f_{1}$ and $e_{1}f_{2}$, and $Y$ is generated by $e_{2}f_{1}$, $e_{2}f_{2}$, $d_{1}g_{1}$, $d_{1}g_{2}$, $d_{2}g_{1}$, and $d_{2}g_{2}$. The total complex is given by

\begin{align*}
e_{x}^{\alpha}f_{2} & \mapsto e_{y}f_{2} +(U_{1} U_{2} + U_{3}U_{4})e_{x}^{\alpha}f_{1} + e_{x}^{\beta}f_{2} +U_{1}d_{2}g_{1}+U_{4}d_{1}g_{2}\\
 e_{x}^{\alpha}f_{1} & \mapsto e_{y}f_{1} +e_{x}^{\beta}f_{1} +d_{1}g_{1} \\
 e_{y}f_{2} & \mapsto (U_{1} U_{2} + U_{3}U_{4})e_{y}f_{1} + (U_{2}+U_{3})e_{x}^{\gamma}f_{2}\\
 e_{y}f_{1} & \mapsto (U_{2}+U_{3})e_{x}^{\gamma}f_{1}\\
 d_{2}g_{2} & \mapsto e_{y}f_{2} + U_{2}d_{1}g_{2} + U_{3}d_{2}g_{1}\\
d_{1}g_{2} & \mapsto U_{3}d_{1}g_{1} + e_{x}^{\gamma}f_{2} +U_{1}e_{y}f_{1}\\
d_{2}g_{1} & \mapsto U_{2}d_{1}g_{1} + e_{x}^{\gamma}f_{2} +U_{4}e_{y}f_{1}\\
d_{1}g_{1} & \mapsto (U_{1}+U_{4})e_{x}^{\gamma}f_{1}\\
e_{x}^{\beta}f_{2} & \mapsto (U_{1} U_{2} + U_{3}U_{4})e_{x}^{\beta}f_{1} +(U_{1} +U_{2} + U_{3}+U_{4})e_{x}^{\gamma}f_{2} \\
e_{x}^{\beta}f_{1} & \mapsto (U_{1} +U_{2} + U_{3}+U_{4})e_{x}^{\gamma}f_{1} \\
e_{x}^{\gamma}f_{2}  & \mapsto (U_{1} U_{2} + U_{3}U_{4})e_{x}^{\gamma}f_{1}\\
e_{x}^{\gamma}f_{1} & \mapsto 0\\
\end{align*}

We will cancel the two arrows $e_{x}^{\alpha}f_{2} \mapsto e_{y}f_{2}$ and $e_{x}^{\alpha}f_{1} \mapsto e_{y}f_{1}$. Doing so yields the complex

\begin{align*}
d_{2}g_{2} & \mapsto  e_{x}^{\beta}f_{2} +(U_{1}+U_{3})d_{2}g_{1} + (U_{2}+U_{4})d_{1}g_{2} \\
d_{1}g_{2} & \mapsto (U_{1}+U_{3})d_{1}g_{1} + e_{x}^{\gamma}f_{2} +U_{1}e_{x}^{\beta}f_{1}\\
d_{2}g_{1} & \mapsto (U_{2}+U_{4})d_{1}g_{1} + e_{x}^{\gamma}f_{2} +U_{4}e_{x}^{\beta}f_{1}\\
d_{1}g_{1} & \mapsto (U_{1}+U_{4})e_{x}^{\gamma}f_{1}\\
e_{x}^{\beta}f_{2} & \mapsto (U_{1} U_{2} + U_{3}U_{4})e_{x}^{\beta}f_{1} +(U_{1} +U_{2} + U_{3}+U_{4})e_{x}^{\gamma}f_{2} \hspace{ 9mm } \\
e_{x}^{\beta}f_{1} & \mapsto (U_{1} +U_{2} + U_{3}+U_{4})e_{x}^{\gamma}f_{1} \\
e_{x}^{\gamma}f_{2}  & \mapsto (U_{1} U_{2} + U_{3}U_{4})e_{x}^{\gamma}f_{1}\\
e_{x}^{\gamma}f_{1} & \mapsto 0\\
\end{align*}

With change of basis $d_{1}g_{2} \mapsto d_{1}g_{2}+d_{2}g_{1}$, the ($d$, $g$) generators become the HOMFLY-PT complex of a resolution, the $e_{x}$ generators are the HOMFLY-PT complex of the singularization, and the map between them is precisely the zip homomorphism. 

\newpage

\subsection{$Z_{1}$ - the Right Cycle}

For the cycle $Z_{1}$, we have the generators $(a,e,j)$ and $(b,g,j)$. The total complex is given by

\begin{align*}
a_{2}e_{1}^{\alpha}j_{2} & \mapsto U_{2}a_{1}e_{1}^{\alpha}j_{2} + U_{4}a_{2}e_{1}^{\alpha}j_{1} + a_{2}e_{2}j_{2} +a_{2}e_{1}^{\beta}j_{2} +U_{1}b_{2}g_{1}j_{2} \\
a_{1}e_{1}^{\alpha}j_{2} & \mapsto U_{4}a_{1}e_{1}^{\alpha}j_{1} +a_{1}e_{2}j_{2} + a_{1}e_{1}^{\beta}j_{2} + U_{1}b_{1}g_{1}j_{2}\\
a_{2}e_{1}^{\alpha}j_{1} & \mapsto U_{2}a_{1}e_{1}^{\alpha}j_{1} +a_{2}e_{2}j_{1} + a_{2}e_{1}^{\beta}j_{1} + U_{1}b_{2}g_{1}j_{1} + a_{2}e_{1}^{\gamma}j_{2}\\
a_{1}e_{1}^{\alpha}j_{1} & \mapsto a_{1}e_{2}j_{1} +a_{1}e_{1}^{\beta}j_{1} + U_{1}b_{1}g_{1}j_{1} + a_{1}e_{1}^{\gamma}j_{2}\\
b_{2}g_{2}j_{2} & \mapsto U_{2}b_{1}g_{2}j_{2}+ U_{4}b_{2}g_{2}j_{1} + U_{3}b_{2}g_{1}j_{2} +a_{2}e_{2}j_{2}\\
b_{1}g_{2}j_{2} & \mapsto U_{4}b_{1}g_{2}j_{1} + U_{3}b_{1}g_{1}j_{2} + a_{2}e_{1}^{\gamma}j_{2}+a_{1}e_{2}j_{2}\\
b_{2}g_{2}j_{1} & \mapsto U_{2}b_{1}g_{2}j_{1} + U_{3}b_{2}g_{1}j_{1}+a_{2}e_{2}j_{1}\\
b_{1}g_{2}j_{1} & \mapsto U_{3}b_{1}g_{1}j_{1} +a_{2}e_{1}^{\gamma}j_{1}+a_{1}e_{2}j_{1}\\ 
b_{2}g_{1}j_{2} & \mapsto U_{2}b_{1}g_{1}j_{2}+ U_{4}b_{2}g_{1}j_{1} + a_{2}e_{1}^{\gamma}j_{2}\\ 
b_{1}g_{1}j_{2} & \mapsto U_{4}b_{1}g_{1}j_{1} + a_{1}e_{1}^{\gamma}j_{2}\\
b_{2}g_{1}j_{1} & \mapsto U_{2}b_{1}g_{1}j_{1} + a_{2}e_{1}^{\gamma}j_{1}\\
b_{1}g_{1}j_{1} & \mapsto a_{1}e_{1}^{\gamma}j_{1} \\
a_{2}e_{2}j_{2} & \mapsto U_{2}a_{1}e_{2}j_{2} + U_{4}a_{2}e_{2}j_{1} + (U_{2}+U_{3})a_{2}e_{1}^{\gamma}j_{2}\\
a_{1}e_{2}j_{2} & \mapsto U_{4}a_{1}e_{2}j_{1}+ (U_{2}+U_{3})a_{1}e_{1}^{\gamma}j_{2}\\
a_{2}e_{2}j_{1} & \mapsto U_{2}a_{1}e_{2}j_{1} + (U_{2}+U_{3})a_{2}e_{1}^{\gamma}j_{1} \\
a_{1}e_{2}j_{1} & \mapsto (U_{2}+U_{3})a_{1}e_{1}^{\gamma}j_{1}\\
a_{2}e_{1}^{\beta}j_{2} & \mapsto U_{2}a_{1}e_{1}^{\beta}j_{2} + U_{4}a_{2}e_{1}^{\beta}j_{1} +(U_{1}+U_{2}+U_{3}+U_{4})a_{2}e_{1}^{\gamma}j_{2}\\
a_{1}e_{1}^{\beta}j_{2} & \mapsto U_{4}a_{1}e_{1}^{\beta}j_{1}+(U_{1}+U_{2}+U_{3}+U_{4})a_{1}e_{1}^{\gamma}j_{2}\\
a_{2}e_{1}^{\beta}j_{1} & \mapsto U_{2}a_{1}e_{1}^{\beta}j_{1} +(U_{1}+U_{2}+U_{3}+U_{4})a_{2}e_{1}^{\gamma}j_{1}\\
a_{1}e_{1}^{\beta}j_{1} & \mapsto (U_{1}+U_{2}+U_{3}+U_{4})a_{1}e_{1}^{\gamma}j_{1}\\
a_{2}e_{1}^{\gamma}j_{2} & \mapsto U_{2}a_{1}e_{1}^{\gamma}j_{2} + U_{4}a_{2}e_{1}^{\gamma}j_{1}\\
a_{1}e_{1}^{\gamma}j_{2} & \mapsto U_{4}a_{1}e_{1}^{\gamma}j_{1}\\
a_{2}e_{1}^{\gamma}j_{1} & \mapsto U_{2}a_{1}e_{1}^{\gamma}j_{1} \\
a_{1}e_{1}^{\gamma}j_{1} & \mapsto 0  \\
\end{align*}

Canceling the isomorphisms $a_{i}e_{1}^{\alpha}j_{k} \mapsto a_{i}e_{2}j_{k}$ for $i,k=1,2$ gives us the complex

\begin{align*}
b_{2}g_{2}j_{2} & \mapsto U_{2}b_{1}g_{2}j_{2}+ U_{4}b_{2}g_{2}j_{1} + (U_{1}+U_{3})b_{2}g_{1}j_{2} +a_{2}e_{1}^{\beta}j_{2}\\
b_{1}g_{2}j_{2} & \mapsto U_{4}b_{1}g_{2}j_{1} + (U_{1}+U_{3})b_{1}g_{1}j_{2} + a_{2}e_{1}^{\gamma}j_{2}+ a_{1}e_{1}^{\beta}j_{2}\\
b_{2}g_{2}j_{1} & \mapsto U_{2}b_{1}g_{2}j_{1} + (U_{1}+U_{3})b_{2}g_{1}j_{1}+ a_{2}e_{1}^{\beta}j_{1} + a_{2}e_{1}^{\gamma}j_{2}\\
b_{1}g_{2}j_{1} & \mapsto (U_{1}+U_{3})b_{1}g_{1}j_{1} +a_{2}e_{1}^{\gamma}j_{1}+ a_{1}e_{1}^{\beta}j_{1} + a_{1}e_{1}^{\gamma}j_{2}\\ 
b_{2}g_{1}j_{2} & \mapsto U_{2}b_{1}g_{1}j_{2}+ U_{4}b_{2}g_{1}j_{1} + a_{2}e_{1}^{\gamma}j_{2}\\ 
b_{1}g_{1}j_{2} & \mapsto U_{4}b_{1}g_{1}j_{1} + a_{1}e_{1}^{\gamma}j_{2}\\
b_{2}g_{1}j_{1} & \mapsto U_{2}b_{1}g_{1}j_{1} + a_{2}e_{1}^{\gamma}j_{1}\\
b_{1}g_{1}j_{1} & \mapsto a_{1}e_{1}^{\gamma}j_{1} \\
a_{2}e_{1}^{\beta}j_{2} & \mapsto U_{2}a_{1}e_{1}^{\beta}j_{2} + U_{4}a_{2}e_{1}^{\beta}j_{1} +(U_{1}+U_{2}+U_{3}+U_{4})a_{2}e_{1}^{\gamma}j_{2}\\
a_{1}e_{1}^{\beta}j_{2} & \mapsto U_{4}a_{1}e_{1}^{\beta}j_{1}+(U_{1}+U_{2}+U_{3}+U_{4})a_{1}e_{1}^{\gamma}j_{2}\\
a_{2}e_{1}^{\beta}j_{1} & \mapsto U_{2}a_{1}e_{1}^{\beta}j_{1} +(U_{1}+U_{2}+U_{3}+U_{4})a_{2}e_{1}^{\gamma}j_{1}\\
a_{1}e_{1}^{\beta}j_{1} & \mapsto (U_{1}+U_{2}+U_{3}+U_{4})a_{1}e_{1}^{\gamma}j_{1}\\
a_{2}e_{1}^{\gamma}j_{2} & \mapsto U_{2}a_{1}e_{1}^{\gamma}j_{2} + U_{4}a_{2}e_{1}^{\gamma}j_{1}\\
a_{1}e_{1}^{\gamma}j_{2} & \mapsto U_{4}a_{1}e_{1}^{\gamma}j_{1}\\
a_{2}e_{1}^{\gamma}j_{1} & \mapsto U_{2}a_{1}e_{1}^{\gamma}j_{1} \\
a_{1}e_{1}^{\gamma}j_{1} & \mapsto 0  \\
\end{align*}

Both the subcomplex corresponding to the singularization and the quotient complex from the smoothing are isomorphic the the HOMFLY-PT homology of the graph with the cycle $Z_{1}$ removed. The edge map between them is an isomorphism, i.e. the homology corresponding to the cycle $Z_{1}$ in $D$ is trivial.

\newpage

\subsection{$Z_{2}$ - the Left Cycle}

For the cycle $Z_{2}$, we have the generators $(c, e, i)$ and $(c, d, h)$. The total complex is given by

\begin{align*}
c_{2}e_{1}^{\alpha}i_{2} & \mapsto U_{1}c_{1}e_{1}^{\alpha}i_{2}+U_{3}c_{2}e_{1}^{\alpha}i_{1} + c_{2}d_{1}h_{2}+ c_{2}e_{1}^{\beta}i_{2} + c_{2}e_{2}i_{2} \\
c_{1}e_{1}^{\alpha}i_{2} & \mapsto U_{3}c_{1}e_{1}^{\alpha}i_{1} + c_{1}d_{1}h_{2}+c_{1}e_{1}^{\beta}i_{2} +c_{1}e_{2}i_{2} +c_{2}e_{1}^{\gamma}i_{2}\\
c_{2}e_{1}^{\alpha}i_{1} & \mapsto U_{1}c_{1}e_{1}^{\alpha}i_{1} + c_{2}d_{1}h_{1}+c_{2}e_{1}^{\beta}i_{1} +c_{2}e_{2}i_{1}\\
c_{1}e_{1}^{\alpha}i_{1} & \mapsto c_{1}d_{1}h_{1}+c_{1}e_{1}^{\beta}i_{1} + c_{1}e_{2}i_{1} +c_{2}e_{1}^{\gamma}i_{1}\\
c_{2}d_{2}h_{2} & \mapsto U_{1}c_{1}d_{2}h_{2} + U_{3}c_{2}d_{2}h_{1} + U_{2}c_{2}d_{1}h_{2} +U_{4}c_{2}e_{2}i_{2}\\
c_{1}d_{2}h_{2} & \mapsto U_{3}c_{1}d_{2}h_{1} + U_{2}c_{1}d_{1}h_{2} +U_{4}c_{1}e_{2}i_{2}\\
c_{2}d_{2}h_{1} & \mapsto U_{1}c_{1}d_{2}h_{1} + U_{2}c_{2}d_{1}h_{1} +U_{4}c_{2}e_{2}i_{1} +U_{4}c_{2}e_{1}^{\gamma}i_{2}\\
c_{1}d_{2}h_{1} & \mapsto U_{2}c_{1}d_{1}h_{1} +U_{4}c_{1}e_{2}i_{1} +U_{4}c_{1}e_{1}^{\gamma}i_{2}\\
c_{2}d_{1}h_{2} & \mapsto U_{1}c_{1}d_{1}h_{2} + U_{3}c_{2}d_{1}h_{1} + U_{4}c_{2}e_{1}^{\gamma}i_{2}\\
c_{1}d_{1}h_{2} & \mapsto U_{3}c_{1}d_{1}h_{1} + U_{4}c_{1}e_{1}^{\gamma}i_{2}\\
c_{2}d_{1}h_{1} & \mapsto U_{1}c_{1}d_{1}h_{1}+ U_{4}c_{2}e_{1}^{\gamma}i_{1}\\
c_{1}d_{1}h_{1} & \mapsto U_{4}c_{1}e_{1}^{\gamma}i_{1}\\
c_{2}e_{2}i_{2} & \mapsto U_{1}c_{1}e_{2}i_{2}+U_{3}c_{2}e_{2}i_{1} +(U_{2}+U_{3})c_{2}e_{1}^{\gamma}i_{2}\\
c_{1}e_{2}i_{2} & \mapsto U_{3}c_{1}e_{2}i_{1}+(U_{2}+U_{3})c_{1}e_{1}^{\gamma}i_{2}\\
c_{2}e_{2}i_{1} & \mapsto U_{1}c_{1}e_{2}i_{1}+(U_{2}+U_{3})c_{2}e_{1}^{\gamma}i_{1}\\
c_{1}e_{2}i_{1} & \mapsto (U_{2}+U_{3})c_{1}e_{1}^{\gamma}i_{1}\\
c_{2}e_{1}^{\beta}i_{2} & \mapsto U_{1}c_{1}e_{1}^{\beta}i_{2}+U_{3}c_{2}e_{1}^{\beta}i_{1} +(U_{1}+U_{2}+U_{3}+U_{4})c_{2}e_{1}^{\gamma}i_{2}\\
c_{1}e_{1}^{\beta}i_{2} & \mapsto U_{3}c_{1}e_{1}^{\beta}i_{1}+(U_{1}+U_{2}+U_{3}+U_{4})c_{1}e_{1}^{\gamma}i_{2} \\
c_{2}e_{1}^{\beta}i_{1} & \mapsto U_{1}c_{1}e_{1}^{\beta}i_{1}+(U_{1}+U_{2}+U_{3}+U_{4})c_{2}e_{1}^{\gamma}i_{1} \\
c_{1}e_{1}^{\beta}i_{1} & \mapsto (U_{1}+U_{2}+U_{3}+U_{4})c_{1}e_{1}^{\gamma}i_{1}\\
c_{2}e_{1}^{\gamma}i_{2} & \mapsto U_{1}c_{1}e_{1}^{\gamma}i_{2}+U_{3}c_{2}e_{1}^{\gamma}i_{1} \\ 
c_{1}e_{1}^{\gamma}i_{2} & \mapsto U_{3}c_{1}e_{1}^{\gamma}i_{1} \\
c_{2}e_{1}^{\gamma}i_{1} & \mapsto U_{1}c_{1}e_{1}^{\gamma}i_{1} \\
c_{1}e_{1}^{\gamma}i_{1} & \mapsto 0 \\
\end{align*}

Reducing $c_{j}e_{1}^{\alpha}i_{k} \mapsto c_{j}e_{2}i_{k}$  for $j,k=1,2$ gives us the complex 

\begin{align*}
c_{2}d_{2}h_{2} & \mapsto U_{1}c_{1}d_{2}h_{2} + U_{3}c_{2}d_{2}h_{1} + (U_{2}+U_{4})c_{2}d_{1}h_{2} +U_{4}c_{2}e_{1}^{\beta}i_{2} \\
c_{1}d_{2}h_{2} & \mapsto U_{3}c_{1}d_{2}h_{1} + (U_{2}+U_{4})c_{1}d_{1}h_{2} +U_{4}(c_{1}e_{1}^{\beta}i_{2} +c_{2}e_{1}^{\gamma}i_{2})\\
c_{2}d_{2}h_{1} & \mapsto U_{1}c_{1}d_{2}h_{1} + (U_{2}+U_{4})c_{2}d_{1}h_{1} +U_{4}(c_{2}e_{1}^{\beta}i_{1} +U_{4}c_{2}e_{1}^{\gamma}i_{2})\\
c_{1}d_{2}h_{1} & \mapsto (U_{2}+U_{4})c_{1}d_{1}h_{1} +U_{4}(c_{1}e_{1}^{\beta}i_{1} +c_{2}e_{1}^{\gamma}i_{1}+c_{1}e_{1}^{\gamma}i_{2})\\
c_{2}d_{1}h_{2} & \mapsto U_{1}c_{1}d_{1}h_{2} + U_{3}c_{2}d_{1}h_{1} + U_{4}c_{2}e_{1}^{\gamma}i_{2}\\
c_{1}d_{1}h_{2} & \mapsto U_{3}c_{1}d_{1}h_{1} + U_{4}c_{1}e_{1}^{\gamma}i_{2}\\
c_{2}d_{1}h_{1} & \mapsto U_{1}c_{1}d_{1}h_{1}+ U_{4}c_{2}e_{1}^{\gamma}i_{1}\\
c_{1}d_{1}h_{1} & \mapsto U_{4}c_{1}e_{1}^{\gamma}i_{1}\\
c_{2}e_{1}^{\beta}i_{2} & \mapsto U_{1}c_{1}e_{1}^{\beta}i_{2}+U_{3}c_{2}e_{1}^{\beta}i_{1} +(U_{1}+U_{2}+U_{3}+U_{4})c_{2}e_{1}^{\gamma}i_{2}\\
c_{1}e_{1}^{\beta}i_{2} & \mapsto U_{3}c_{1}e_{1}^{\beta}i_{1}+(U_{1}+U_{2}+U_{3}+U_{4})c_{1}e_{1}^{\gamma}i_{2} \\
c_{2}e_{1}^{\beta}i_{1} & \mapsto U_{1}c_{1}e_{1}^{\beta}i_{1}+(U_{1}+U_{2}+U_{3}+U_{4})c_{2}e_{1}^{\gamma}i_{1} \\
c_{1}e_{1}^{\beta}i_{1} & \mapsto (U_{1}+U_{2}+U_{3}+U_{4})c_{1}e_{1}^{\gamma}i_{1}\\
c_{2}e_{1}^{\gamma}i_{2} & \mapsto U_{1}c_{1}e_{1}^{\gamma}i_{2}+U_{3}c_{2}e_{1}^{\gamma}i_{1} \\ 
c_{1}e_{1}^{\gamma}i_{2} & \mapsto U_{3}c_{1}e_{1}^{\gamma}i_{1} \\
c_{2}e_{1}^{\gamma}i_{1} & \mapsto U_{1}c_{1}e_{1}^{\gamma}i_{1} \\
c_{1}e_{1}^{\gamma}i_{1} & \mapsto 0 \\
\end{align*}

As in the case of $Z_{1}$, the subcomplex and quotient complex corresponding to the singularization and smoothing, respectively, are both isomorphic to the HOMFLY-PT complex of the graph minus our cycle. However, instead of the edge map being the canonical isomorphism, it is given by multiplication by $U_{4}$.

\subsection{$Z_{3}$ - The First Diagonal}

Unlike the previous three calculations, the cycle $Z_{3}$ only appears in one of the resolutions, in this case the singularization. The complex of the oriented smoothing, given by 

\[X^{\alpha} \xrightarrow{\Phi_{A^{-}}}Y\]

\noindent
comes from the Heegaard diagram in Figure \ref{HDCrossing}, with $X$'s placed at the $B$'s. This diagram can be changed to the standard diagram for the smoothing via isotopies and handleslides that take place away from the additional basepoints. Since this new diagram does not contain any generators with the connectivity of $Z_{3}$, it follows that the original complex for the smoothing was acyclic.

Thus, we are left with the subcomplex corresponding to the singularization

\[X^{\beta} \xrightarrow{U_{1}+U_{2}+U_{3}+U_{4}}X^{\gamma} \]

\noindent
$X$ has generators $a_{j}e_{1}i_{k}$ for $j,k=1,2$, so for each of the $X$'s we get the Koszul complex shown below.

\begin{figure}[!h]
\centering
\begin{tikzpicture}
  \matrix (m) [matrix of math nodes,row sep=5em,column sep=5em,minimum width=2em] {
    a_{2}e_{1}i_{2} &a_{1}e_{1}i_{2} \\
    a_{2}e_{1}i_{1} & a_{1}e_{1}i_{1} \\};
  \path[-stealth]
    (m-1-1) edge node [left] {$U_{3}$} (m-2-1)
            edge node [above] {$U_{2}$} (m-1-2)
    (m-2-1) edge node [below] {$U_{2}$} (m-2-2)
    (m-1-2) edge node [right] {$U_{3}$} (m-2-2);
\end{tikzpicture}
\end{figure}

\noindent
Hence, the total complex is a cube complex generated by the three edge maps $U_{2}$, $U_{3}$, and $U_{1}+U_{2}+U_{3}+U_{4}$. Canceling the $U_{2}$ and $U_{3}$ maps, we get the HOMFLY-PT complex of the graph with $Z_{3}$ removed.

\subsection{$Z_{4}$ - The Second Diagonal}

We can apply the same arguments used in the previous section to show that \[X^{\alpha} \xrightarrow{\Phi_{A^{-}}}Y\] is acyclic, or we can see directly that $\Phi_{A^{-}}$ is actually an isomorphism. Either way, it is apparent that the smoothing has trivial homology, so we are once again left with 

\[X^{\beta} \xrightarrow{U_{1}+U_{2}+U_{3}+U_{4}}X^{\gamma} \]

\noindent
$X$ has generators $c_{i}e_{1}j_{k}$ for $i,k=1,2$, so for each of the $X$'s we get the complex

\begin{figure}[!h]
\centering
\begin{tikzpicture}
  \matrix (m) [matrix of math nodes,row sep=5em,column sep=5em,minimum width=2em] {
    c_{2}e_{1}j_{2} &c_{1}e_{1}j_{2} \\
    c_{2}e_{1}j_{1} & c_{1}e_{1}j_{1} \\};
  \path[-stealth]
    (m-1-1) edge node [left] {$U_{4}$} (m-2-1)
            edge node [above] {$U_{1}$} (m-1-2)
    (m-2-1) edge node [below] {$U_{1}$} (m-2-2)
    (m-1-2) edge node [right] {$U_{4}$} (m-2-2);
\end{tikzpicture}
\end{figure}

Hence, the total complex is a cube complex generated by the three edge maps $U_{1}$, $U_{4}$, and $U_{1}+U_{2}+U_{3}+U_{4}$. Canceling the $U_{1}$ and $U_{4}$ maps, we get the HOMFLY-PT complex of the graph with $Z_{4}$ removed.

\subsection{The Full Cycle $Z_{5}$}

Like the previous two cycles, $Z_{5}$ does not appear as a cycle in both the smoothing and the singularization - in this case it only appears in the smoothing. This is readily apparent from the fact that $e$ is not included in any of the generators, hence $X$ is trivial. Thus, our total complex is $Y$, which is the Koszul complex on $U_{1},U_{2},U_{3}$, and $U_{4}$. Canceling all four of these once again gives the HOMFLY-PT complex of the diagram with the cycle $Z_{5}$ removed.

\bibliography{TriplyGradedHomology}{}
\bibliographystyle{plain}

\end{document}